\newtheorem{Theorem}{Theorem}[section]
\newtheorem{Lemma}[Theorem]{Lemma}
\newtheorem{Corollary}[Theorem]{Corollary} 
\newtheorem{Proposition}[Theorem]{Proposition}
\newtheorem*{maintheorem}{Theorem}
\theoremstyle{definition}
\newtheorem{Definition}[Theorem]{Definition}
\newtheorem{chu}[Theorem]{}
\newtheorem*{chunk*}{}
\numberwithin{equation}{Theorem}
\newtheorem{Remark}[Theorem]{Remark}
\newcommand{\depth}{\operatorname{depth}}
\newcommand{\het}{\operatorname{ht}}
\newcommand{\Supp}{\operatorname{Supp}}
\newcommand{\Min}{\operatorname{Min}}
\newcommand{\Ann}{\operatorname{Ann}}
\newcommand{\Spec}{\operatorname{Spec}}
\newcommand{\fm}{{\mathfrak m}}
\newcommand{\fn}{{\mathfrak n}}
\newcommand{\fp}{{\mathfrak p}}
\newcommand{\fq}{{\mathfrak q}}
\begin{document}
\title[Linear Formula]{A Linear Formula For the Generalized Multiplicity Sequence\\ }

\author[Dunn]{Thomas Dunn}
\address{Department of Mathematics 2750\\ North Dakota State University\\PO BOX 6050\\ Fargo, ND 58108-6050\\ USA}
\email{thomas.dunn@ndsu.edu}

\date{}

\begin{abstract}
For an arbitrary ideal $I$ in a local ring $R$ and a finitely generated $R$-module $M$, Achilles and Manaresi introduced the sequence of generalized multiplicities $c_{k}(I,M)$ ($k=0,...,\dim M$) as a generalization of the classical Samuel multiplicity $e(I,M)$ of an $\fm$-primary ideal $I$.  We prove a formula expressing each generalized multiplicity $c_{k}(I,M)$ as a linear combination of certain local multiplicities $c_{0}(IR_{\fp},M_{\fp})$.  As a consequence, when $M$ is formally equidimensional, we prove that if $I\subseteq J$ and $c_{k}(I,M)=c_{k}(J,M)$ for all $k=0,...,\dim M$ then $I$ is a reduction of $(J,M)$.  The converse of this statement is also known to be true by a result of Ciuperc\u{a}.  This theorem gives a complete numerical characterization of the integral closure, generalizing a well known theorem of Rees.
\end{abstract}

\maketitle

\section{Introduction} 
Let $(R,\fm)$ be a local ring and $M$ a finitely generated $R$-module.  An ideal $I$ is said to be an ideal of definition on $M$ if the length $\lambda(M/IM)$ is finite.  In the case of $M=R$, being an ideal of definition on $R$ is equivalent to $I$ being $\fm$-primary.  For such an ideal the length $\lambda(M/I^{n}M)$ becomes eventually polynomial in $n$ of degree $d=\dim M$.  The leading coefficient of the  polynomial is $\frac{e(I,M)}{d!}$ where $e(I,M)$ is called the Hilbert-Samuel multiplicity of $I$ on $M$.  This multiplicity is a very important invariant in ideal theory.  Among other things, it gives a numerical characterization of the integral closure of an ideal, or equivalently, of reduction ideals.  For $I\subseteq J$ ideals, $I$ is said to be a reduction of $(J,M)$ if there exists some $n$ with $J^{n+1}M=IJ^{n}M$.  If $I$ is a reduction of $(J,M)$ and $I$ is an ideal of definition on $M$, then $e(I,M)=e(J,M)$.  The converse is true if $M$ is formally equidimensional as proved initially Rees \cite[Theorem 11.3.1]{HS} for the case $M=R$.  

In the case where $\lambda(M/IM)$ is not necessarily finite, consider the components of the bigraded module $G_{\fm}(G_{I}(M))$.  Achilles and Manaresi \cite{AM} considered the sum of the lengths of the bigraded components.  This sum eventually becomes a polynomial of degree $d=\dim M$.  The degree $d$ coefficients of this polynomial are given by $\frac{c_{k}(I,M)}{k!(d-k)!}$ where $c_{k}(I,M)$ is the $k$th generalized multiplicity of $I$ on $M$.  The multiplicity $c_{0}(I,M)$ coincides with the $j$-multiplicity $j(I,M)$ originally defined by Achilles and Manaresi \cite{AM2} as a generalization of the classical multiplicity $e(I,M)$.  The $j$-multiplicity is of particular interest for ideals of maximal analytic spread, which is the only case when it is nonzero.  Moreover, in the case when $\lambda(M/IM)$ is finite, the $j$-multiplicity coincides with the classical multiplicity $e(I,M)$.

There have been many attempts at proving generalizations of the theorem of Rees that give numerical characterizations of reduction ideals for arbitrary ideals.  Flenner and Manaresi proved that $I$ is a reduction of $(J,M)$ if and only if $j(I_{\fp},M_{\fp})=j(J_{\fp},M_{\fp})$ for every $\fp\in\Supp(M)$ \cite[Theorem 3.3]{FM}.  Ciuperc\u{a} proved if $I$ is a reduction of $(J,M)$ then $c_{k}(I,M)=c_{k}(J,M)$ for $k=0,...,d$ \cite[Theorem 2.7]{CC}.  

In this paper, we will prove the following.

\begin{maintheorem}
Let $(R,\fm)$ be a local ring, $I\subseteq R$ an ideal, $M$ a finitely generated $R$-module of dimension $d$ and denote
\[\Lambda_{k}=\Lambda_{k}(I,M)=\left\{\fp\mid\fp\in\Supp(M/IM), \dim(R/\fp)=k, \dim(R/\fp)+\dim M_{\fp} = \dim M\right\}.\]

Assume that the following condition is satisfied:
\[(\ast) \dim M_{\fp}=\dim(I^{n}M_{\fp})\text{ for all }n\geq0\text{ and for all }\fp\in\Supp(M/IM)\]
Then for all $k$=0,1,...,$d$ we have
\begin{equation}
c_{k}(I,M)=\sum_{\fp\in\Lambda_{k}}c_{0}(IR_{\fp},M_{\fp})e(R/\fp).
\end{equation}
\end{maintheorem}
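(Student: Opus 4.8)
The plan is to proceed by induction on $\dim M = d$ and to exploit the fundamental property that the generalized multiplicities $c_k(I,M)$ are the top-degree coefficients of the Hilbert polynomial of the bigraded module $G_\fm(G_I(M))$. The first step is to recall the associativity-type formula for $c_k$: following Achilles–Manaresi, there is a decomposition
\[
c_k(I,M) = \sum_{\fp} c_0(I R_\fp, M_\fp)\,\ell(M_\fp \text{-type factor})
\]
when one filters $M$ by a prime filtration, but the key subtlety is controlling which primes $\fp$ actually contribute. I would first reduce to the case $M = R/\fp$ for a single prime using a prime filtration $0 = M_0 \subset M_1 \subset \cdots \subset M_t = M$ with $M_i/M_{i-1} \cong R/\fp_i$; since both sides of $(1)$ are additive on short exact sequences (for the left side this is a standard property of generalized multiplicities, for the right side it follows from additivity of $c_0$ and the fact that $e(R/\fp)$ counts multiplicities correctly along the filtration), it suffices to prove the formula when $M = R/\fp_i$. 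One must check that hypothesis $(\ast)$ passes to the filtration quotients, which should follow since $\Supp(M_i/M_{i-1}) \subseteq \Supp(M/IM)$ for the relevant indices.

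**The core computation**

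For $M = R/\fp$, let $r = \dim(R/\fp)$. If $I \subseteq \fp$, then $I(R/\fp) = 0$ and all $c_k$ vanish for $k < r$ while the sum on the right is also empty except possibly at $k = r$; this degenerate case is handled directly. So assume $I \not\subseteq \fp$. The heart of the argument is a splitting of the Hilbert function of $G_\fm(G_I(R/\fp))$. I would stratify by the minimal primes of $I(R/\fp)$, or rather use the short exact sequence relating $R/\fp$, $R/(\fp, x)$ for a suitable superficial-type element $x \in I$, and an appropriate submodule, to set up the induction: intersecting with a general element of $I$ drops $\dim M_\fp$ by one at the relevant primes, and condition $(\ast)$ guarantees that this process does not collapse dimensions prematurely, i.e. $\dim(I^n M_\fp) = \dim M_\fp$ means $I$ behaves like an ideal of definition locally at each contributing $\fp$, so that $c_0(I R_\fp, M_\fp)$ is genuinely a nonzero leading coefficient. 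The bookkeeping that $\dim(R/\fp) + \dim M_\fp = \dim M$ for contributing primes is exactly what makes the degree count in the bigraded Hilbert polynomial match up: a prime $\fp$ with $\dim(R/\fp) = k$ contributes to the $(k, d-k)$ bidegree component precisely when $\dim M_\fp = d-k$.

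**The main obstacle and closing**

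The hard part will be the precise identification of the leading coefficient: showing that the contribution of each $\fp \in \Lambda_k$ to the degree-$d$ part of the bigraded Hilbert polynomial of $G_\fm(G_I(M))$ is exactly $c_0(I R_\fp, M_\fp) \, e(R/\fp) / (k!(d-k)!)$, with no cross terms and no contributions from primes outside $\Lambda_k$. Primes $\fp$ with $\dim(R/\fp) + \dim M_\fp < \dim M$ must be shown to contribute only to lower-degree terms of the Hilbert polynomial — this is where the dimension inequality $\dim(R/\fp) + \dim M_\fp \leq \dim M$ (a standard fact for finitely generated modules over a local ring, or its catenary hull) is used to bound degrees, and condition $(\ast)$ prevents the pathology where a prime would contribute with the wrong multiplicity because $I$ is not "full-dimensional" locally. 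I expect the cleanest route is to compare with the associativity formula for $j$-multiplicity due to Flenner–Manaresi, localizing at each $\fp$, then summing the local contributions weighted by $e(R/\fp)$ via the general associativity formula for multiplicities; the induction on $d$ then reduces the statement for $c_k$ to the already-established statement for $c_0 = j$ applied in lower-dimensional localizations, with $(\ast)$ ensuring the inductive hypothesis applies.
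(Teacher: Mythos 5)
Your proposal has two genuine gaps, and the first one is structural. The reduction to $M=R/\fp$ via a prime filtration does not go through as stated: neither side of the formula is straightforwardly additive here. The right-hand side depends on $M$ through the sets $\Lambda_{k}(I,M)$, which are defined using $\Supp(M/IM)$ and the condition $\dim(R/\fp)+\dim M_{\fp}=\dim M$ with the \emph{fixed} dimension $d$ of $M$, so the formula for a filtration quotient $R/\fp_{i}$ of smaller dimension is indexed by a different family of primes and does not simply add up against $\Lambda_{k}(I,M)$. Worse, the hypothesis $(\ast)$ does not pass to the quotients: any $\fp_{i}$ containing $I$ gives $I^{n}(R/\fp_{i})=0$, so $\dim(I^{n}(R/\fp_{i})_{\fq})\neq\dim(R/\fp_{i})_{\fq}$ whenever the latter is positive, exactly the case you defer as ``handled directly.'' You have also misread the role of $(\ast)$: it does not make $I$ behave like an ideal of definition locally, and it does not make $c_{0}(IR_{\fp},M_{\fp})$ nonzero (that multiplicity vanishes unless $\ell_{M_{\fp}}(IR_{\fp})=\dim M_{\fp}$). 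Its actual purpose is to guarantee $\dim I^{n}M_{\fp}=\dim M_{\fp}$, which is what allows one to replace $M$ by $I^{c}M$ without changing $c_{k}$ for $k\leq d-1$ and thereby arrange $\depth_{I}(M)>0$ so that a superficial element which is a nonzero divisor exists.

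The second gap is that the core inductive mechanism is only gestured at. The paper's induction on $d$ works because one can choose a single element $x\in I\setminus\fm I$ that is simultaneously a nonzero divisor on $M$, superficial for $(I,M)$, and superficial for $(IR_{\fp},M_{\fp})$ at every one of the finitely many primes where $c_{0}(IR_{\fp},M_{\fp})\neq0$ (this simultaneity is a real technical point, proved by a vector-space avoidance argument comparing subspaces over the various residue fields), so that by Ciuperc\u{a}'s invariance result both $c_{k}(I,M)=c_{k}(I,M/xM)$ and $c_{0}(IR_{\fp},M_{\fp})=c_{0}(IR_{\fp},(M/xM)_{\fp})$ hold and the two sides of the formula drop in dimension compatibly. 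That invariance only holds for $k\leq d-2$, which forces the cases $k=d$ and $k=d-1$ to be handled separately by the Achilles--Manaresi result (their Theorem 2.3, parts (1) and (3)); your sketch does not account for this restriction. Finally, your closing paragraph concedes that the identification of the leading coefficients --- the actual content of the theorem --- is left to an expected comparison with an ``associativity formula,'' so the hardest step is not carried out.
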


We then show that the condition $(\ast)$ is satisfied whenever $\het_{M}(I)>0$.  As a consequence of the above equality, we prove the converse of Ciuperc\u{a}'s result.

\begin{maintheorem}
Let $(R,\fm)$ be a local ring, $M$ a finitely generated formally equidimensional module, and $I\subseteq J$ ideals with $\het_{M}(I)>0$.  If $c_{k}(I,M)=c_{k}(J,M)$ for all $k\leq d=\dim(M)$, then $I$ is a reduction of $(J,M)$.
\end{maintheorem}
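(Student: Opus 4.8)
The strategy is to reduce the statement, via the theorem of Flenner and Manaresi, to a system of equalities of $j$-multiplicities at the primes of $\Supp M$, and then to establish those by an induction on dimension driven by the linear formula of the first theorem; write $d=\dim M$. Since $c_0=j$, the Flenner--Manaresi criterion says it suffices to prove $c_0(IR_\fp,M_\fp)=c_0(JR_\fp,M_\fp)$ for every $\fp\in\Supp M$. To set up, note that $I\subseteq J$ gives $\Supp(M/JM)\subseteq\Supp(M/IM)$ and $\het_M(J)\ge\het_M(I)>0$, so $(\ast)$ holds for $I$ and for $J$, and, positivity of $\het_M(-)$ passing to localizations since no minimal prime of $M_\fp$ can contain $I$, also for $IR_\fp$ and $JR_\fp$ on $M_\fp$; hence the first theorem applies to every pair occurring below. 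Formal equidimensionality of $M$ makes the dimension formula $\dim(R/\fp)+\dim M_\fp=d$ hold throughout $\Supp M$ and persist under localization, so the third clause of the definition of $\Lambda_k$ is automatic, $\Lambda_k(I,M)=\{\fp\in\Supp(M/IM):\dim(R/\fp)=k\}$, $\Lambda_k(J,M)\subseteq\Lambda_k(I,M)$, and $c_0(JR_\fp,M_\fp)=0$ for $\fp\in\Lambda_k(I,M)\smallsetminus\Lambda_k(J,M)$. Subtracting the two instances of the linear formula, the hypothesis $c_k(I,M)=c_k(J,M)$ becomes
\begin{equation*}
\sum_{\fp\in\Lambda_k(I,M)}\bigl(c_0(IR_\fp,M_\fp)-c_0(JR_\fp,M_\fp)\bigr)\,e(R/\fp)=0\qquad(0\le k\le d),
\tag{$\dagger$}
\end{equation*}
the summands at level $k$ running over exactly the primes $\fp$ with $\dim M_\fp=d-k$.

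Next I would prove $c_0(IR_\fp,M_\fp)=c_0(JR_\fp,M_\fp)$ by induction on $t=\dim M_\fp$. For $t=0$ both sides equal $\length(M_\fp)$, and for $\fp=\fm$ (so $t=d$) the equality is the case $k=0$ of the hypothesis. Fix now $\fp\ne\fm$ with $t\ge 1$ and assume the equality at every $\fq$ with $\dim M_\fq<t$. Applying the first theorem to $M_\fp$ over $R_\fp$ at level $k$, for $1\le k\le t$ the primes occurring are the $\fq\subseteq\fp$ in $\Supp(M/IM)$ with $\dim M_\fq=t-k<t$; so by the inductive hypothesis (and the same $\Lambda$-bookkeeping as above) $c_k(IR_\fp,M_\fp)=c_k(JR_\fp,M_\fp)$ for $1\le k\le t$, and likewise at every prime of dimension $t$. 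Moreover, as the inductive hypothesis also gives $c_0$ equality at every $\fq\subsetneq\fp$, Flenner--Manaresi applied over $R_\fq$ shows that $IR_\fp$ is a reduction of $(JR_\fp,M_\fp)$ at every prime strictly below $\fp R_\fp$. Finally invoke $(\dagger)$ at the index $k=d-t$, whose summands are indexed by the primes of dimension $t$: the equality at $\fp$ will follow once every such summand is nonnegative, i.e.\ once one knows $c_0(IR_{\fp'},M_{\fp'})\ge c_0(JR_{\fp'},M_{\fp'})$ for all $\fp'$ with $\dim M_{\fp'}=t$.

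That inequality is the step I expect to be the main obstacle. In the case where $IR_{\fp'}$ is an ideal of definition on $M_{\fp'}$, equivalently $\het_{M_{\fp'}}(IR_{\fp'})=t$, it is immediate: then $c_0(IR_{\fp'},M_{\fp'})=e(IR_{\fp'},M_{\fp'})$ and $c_0(JR_{\fp'},M_{\fp'})=e(JR_{\fp'},M_{\fp'})$, and $\length(M_{\fp'}/I^nM_{\fp'})\ge\length(M_{\fp'}/J^nM_{\fp'})$ for all $n$ gives $e(IR_{\fp'},M_{\fp'})\ge e(JR_{\fp'},M_{\fp'})$. In particular the case $t=1$ requires nothing beyond Hilbert--Samuel monotonicity, since there $\het_{M_{\fp'}}(IR_{\fp'})=1$ is automatic; this already subsumes the one-dimensional instance of the present theorem, which is the classical theorem of Rees. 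The genuinely new case is when $c_0(IR_{\fp'},M_{\fp'})$ is a true $j$-multiplicity; there one has to show that $c_0$ is monotone under the inclusion $IR_{\fp'}\subseteq JR_{\fp'}$, given that the higher $c_k$'s already agree and that $IR_{\fp'}$ is a reduction of $(JR_{\fp'},M_{\fp'})$ at every prime strictly below $\fp' R_{\fp'}$. I would attempt this either by a further induction on $\dim M_{\fp'}$, pushing the comparison via the linear formula over $M_{\fp'}$ and the known agreement of the higher $c_k$'s down to strictly smaller modules where the ideal-of-definition case and the inductive hypothesis apply, or by cutting $M_{\fp'}$ with a sufficiently general element of $I$ so as to drop both the dimension and the analytic spread by one and land in the ideal-of-definition situation. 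Once this monotonicity is secured, $(\dagger)$ forces all the local $c_0$'s to coincide, and the Flenner--Manaresi criterion yields that $I$ is a reduction of $(J,M)$.
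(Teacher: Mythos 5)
Your overall architecture is the same as the paper's: reduce via Theorem \ref{jm} (Flenner--Manaresi) to the local equalities $c_{0}(IR_{\fp},M_{\fp})=c_{0}(JR_{\fp},M_{\fp})$, and prove these by induction on $\dim M_{\fp}$ (the paper inducts on $\het_{M}\fp$, which amounts to the same thing for a formally equidimensional module), using the linear formula at level $k=d-\dim M_{\fp}$ together with the containment $\Lambda_{k}(J,M)\subseteq\Lambda_{k}(I,M)$. But there is a genuine gap exactly where you flag it: the inequality $c_{0}(IR_{\fp'},M_{\fp'})\geq c_{0}(JR_{\fp'},M_{\fp'})$ at a prime $\fp'$ where $IR_{\fp'}$ is not an ideal of definition on $M_{\fp'}$, given only that $IR_{\fq}$ is a reduction of $(JR_{\fq},M_{\fq})$ for all $\fq\subsetneq\fp'$. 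Without it, $(\dagger)$ is a vanishing sum of terms of unknown sign and yields nothing. The paper does not prove this inequality from scratch either; it invokes \cite[Theorem 3.2]{FM}, which is precisely the statement that the $j$-multiplicity is monotone under $I\subseteq J$ once $I$ is locally a reduction of $J$ away from the closed point. That semicontinuity theorem is the real engine behind the induction, and your proposal neither cites it nor supplies a proof.

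Your two suggested workarounds do not close the gap as stated. The linear formula is vacuous at $k=0$ (the only prime contributing to $\Lambda_{0}(IR_{\fp'},M_{\fp'})$ is $\fp'R_{\fp'}$ itself, so the formula reads $c_{0}=c_{0}$), so ``pushing the comparison down via the linear formula'' can only reprove the equality of the higher $c_{k}$'s over $R_{\fp'}$, which you already have and which places no constraint on $c_{0}$. Cutting by a sufficiently general element of $I$ is essentially how Flenner and Manaresi prove their Theorem 3.2, but making it work requires substantial further argument (a superficial element compatible with all relevant localizations, preservation of the local-reduction hypothesis and of equidimensionality after cutting, and control of $j$ under the cut when $\ell_{M_{\fp'}}(IR_{\fp'})=\dim M_{\fp'}$), none of which is carried out. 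In short, your proposal is the paper's argument minus its one essential external ingredient; inserting the citation to \cite[Theorem 3.2]{FM} at that step would complete it.
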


\section{Preliminaries}
We begin with the following definitions.

\begin{Definition}
The height of $I$ in $M$ and analytic spread of $I$ in $M$ are defined as in \cite{FM}:
\[\het_{M}I:=\min\left\{\dim M_{\fp}\mid\fp\in\Supp M\cap V(I)\right\},\]
\[\ell_{M}(I):=\dim G_{I}(M)/\fm G_{I}(M).\]
\end{Definition}

For the generalized multiplicity sequence, we use the associated bigraded module $G_{\fm}(G_{I}(M)$.  The degree $(i,j)$ part of the bigraded module is given by \[\frac{I^{i}\fm^{j}M+I^{i+1}M}{I^{i}\fm^{j+1}M+I^{i+1}M}.\]

\begin{Definition}\cite{AM}
Consider the Hilbert function of the bigraded module $G_{\fm}(G_{I}(M))$
\[h_{I,M}(u,v)=\sum_{i=0}^{u}\sum_{j=0}^{v}\lambda\Big(\frac{\fm^{i}I^{j}M+I^{j+1}M}{\fm^{i+1}I^{j}M+I^{j+1}M}\Big).\]
The function $h_{I,M}(u,v)$ is a polynomial $p_{I,M}(u,v)$ for $u,v\gg0$ of degree $d=\dim M$.  The homogeneous degree $d$ part of $p_{I,M}(u,v)$ is 
\[\sum_{k+n=d}\frac{c_{k}(I,M)}{k!n!}u^{k}v^{n}\]
where $c_{k}(I,M)$ is the $k$th generalized multiplicity of $I$ on $M$.
\end{Definition}

The following is a version for modules of a result proved by Achilles and Manaresi.  This is particularly useful in that it gives the bounds for the vanishing multiplicities.

\begin{Proposition}\label{AM}
\cite[Theorem 2.3]{AM}.  Let $(R,\fm)$ be a local ring, $I\subset R$ an ideal, $M$ a finitely generated module, $G=G_{I}(R)$, $N=G_{I}(M)$.  Further, let $\ell=\ell_{M}(I)$ and $q=\dim(M/IM)$.  Then
\begin{enumerate}
	\item $c_{i}(I,M)=0$ for $i<d-\ell$ or $q<i\leq d$;
	\item $c_{d-l}(I,M)=\sum_{\beta}e(\fm G_{\beta},N_{\beta})e(G/\beta)$, where $\beta$ runs through all the highest dimensional associated primes of $N/\fm N$ such that $\dim(G/\beta)+\dim(N_{\beta})=\dim(N)$;
	\item $c_{q}(I,M)=\sum_{\fp}e(IR_{\fp},M_{\fp})e(R/\fp)$, where $\fp$ runs through all highest dimensional associated primes of $M/IM$ such that $\dim(R/\fp)+\dim(M_{\fp})=\dim(M)$.
\end{enumerate}
\end{Proposition}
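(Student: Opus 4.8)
The statement $(1)$--$(3)$ is the module version of \cite[Theorem 2.3]{AM}, so I would prove it by carrying out the argument of Achilles and Manaresi with $R$ replaced by $M$ throughout -- equivalently, by feeding the right bigraded module into the bigraded form of their theorem. The object to use is
\[
T := G_\fm(G_I(M)),\qquad T_{(i,j)}=\frac{\fm^i I^j M+I^{j+1}M}{\fm^{i+1}I^j M+I^{j+1}M},
\]
a finitely generated bigraded module over the standard bigraded algebra $S:=G_\fm(G_I(R))$, which is generated over $S_{(0,0)}=R/\fm=k$ by $S_{(1,0)}$ and $S_{(0,1)}$. By the Definition above, $h_{I,M}$ is the summatory Hilbert function of $T$; it agrees for $u,v\gg 0$ with a polynomial of degree $d=\dim M=\dim T$, and the numbers $c_k(I,M)$ are, up to the factorials, the coefficients of its degree-$d$ part. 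Replacing $R$ by $R[X]_{\fm R[X]}$ if necessary -- a faithfully flat local extension that changes none of the quantities appearing in the statement -- we may assume $k$ is infinite.

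The first step is to pin down the two coordinate-axis slices of $T$. Writing $\fa:=S_{(1,0)}S$ for the $\fm$-direction ideal and $\fb:=S_{(0,1)}S$ for the $I$-direction ideal, one checks directly from the bidegree formula that
\[
T/\fa T\;\cong\;G_I(M)/\fm\,G_I(M)\;=\;N/\fm N,\qquad T/\fb T\;\cong\;G_\fm(M/IM),
\]
so that $\dim(T/\fa T)=\ell_M(I)=\ell$ and $\dim(T/\fb T)=\dim(M/IM)=q$. These two numbers cut out the region in which the degree-$d$ form of the Hilbert polynomial of $T$ can be supported: a general element of $\fb$ (resp.\ $\fa$) drops the Krull dimension by one on any subquotient of $T$ on which $\fb$ (resp.\ $\fa$) is not nilpotent, and iterating this in the two directions shows that the coefficient of $u^k v^{d-k}$ -- which records $k$ cuts from the $\fm$-direction and $d-k$ cuts from the $I$-direction -- vanishes unless $k\le q$ and $d-k\le\ell$. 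This is $(1)$.

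For the two extreme coefficients I would take a bigraded prime filtration $0=T_0\subset T_1\subset\cdots\subset T_r=T$ with $T_t/T_{t-1}\cong S/\mathfrak P_t$ up to a bidegree shift. The degree-$d$ form of $h_{I,M}$ is then the sum of the corresponding forms over the layers with $\dim S/\mathfrak P_t=d$, which gives an associativity formula writing $c_k(I,M)$ as a sum, over the primes $\mathfrak P\in\Min(T)$ with $\dim S/\mathfrak P=d$, of $\lambda_{S_{\mathfrak P}}(T_{\mathfrak P})$ times a multiplicity attached to $S/\mathfrak P$. Specializing $k=q$ and cutting by $q$ general elements of $\fa$ collapses everything onto $T/\fb T=G_\fm(M/IM)$: only the $\mathfrak P$ arising from associated primes $\fp$ of $M/IM$ with $\dim R/\fp=q$ and $\dim R/\fp+\dim M_\fp=d$ survive, and combining the associativity formula above with the classical associativity formula for the Samuel multiplicity rewrites the sum as $\sum_{\fp}e(IR_\fp,M_\fp)\,e(R/\fp)$, which is $(3)$. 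The case $k=d-\ell$ is symmetric, collapsing onto the other axis $T/\fa T=N/\fm N$: the contributing primes become the highest-dimensional associated primes $\beta$ of $N/\fm N$ with $\dim G/\beta+\dim N_\beta=\dim N$, and the weights assemble into $e(\fm G_\beta,N_\beta)\,e(G/\beta)$, giving $(2)$.

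The one place that needs genuine care -- and the step I expect to be the main obstacle -- is this last one: verifying that exactly the asserted primes contribute and that the two layers of associativity (additivity over the bigraded prime filtration of $T$ on one hand, classical associativity of the Samuel multiplicity over the localized rings on the other) combine into precisely the single sums of $(2)$ and $(3)$, with the stated weights and no residual terms. Part $(1)$ and the axis identifications are essentially bookkeeping once $T$ is in hand. If one prefers not to re-run any of this, the most economical route is to invoke the bigraded-module form of \cite[Theorem 2.3]{AM} directly, after recording only the three facts $\dim T=d$, $T/\fa T\cong N/\fm N$, and $T/\fb T\cong G_\fm(M/IM)$; then $(1)$--$(3)$ follow by translation.
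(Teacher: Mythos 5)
The paper itself gives no proof of this Proposition: it is quoted as the module version of \cite[Theorem 2.3]{AM}, so the intended justification is exactly the reduction you describe. Your setup --- $T=G_{\fm}(G_{I}(M))$ as a standard bigraded module over $G_{\fm}(G_{I}(R))$ with $\dim T=d$, $T/\fa T\cong N/\fm N$ and $T/\fb T\cong G_{\fm}(M/IM)$, followed by running the Achilles--Manaresi argument (or invoking its bigraded-module form directly) --- is correct and essentially the same approach the paper takes by citation.
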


It is also proved in \cite{AM} that $c_{0}(I,M)$ coincides with the so-called $j$-multiplicity $j(I,M)$, an invariant defined by Achilles and Manaresi in \cite{AM2}.  From \cite{AM2} $j(I,M)\neq0$ if and only if $I$ has maximal analytic spread, that is $\ell_{M}(I)=\dim M$.  Flenner and Manaresi proved the following result that relates the local $j$-multiplicities with reductions.
 
\begin{Theorem}\label{jm}
\cite[Theorem 3.3]{FM}. Let $(R,\fm)$ be a local ring and $M$ a formally equidimensional finitely generated module.  Let $I\subseteq J\subseteq \fm$ be ideals.  Then the following are equivalent:
\begin{enumerate}
\item $I$ is a reduction of $(J,M)$;
\item $j(IR_{\fp},M_{\fp})=j(JR_{\fp},M_{\fp})$ for all $\fp\in\Spec(R)$;
\item $j(IR_{\fp},M_{\fp})\leq j(JR_{\fp},M_{\fp})$ for all $\fp\in\Spec(R)$.
\end{enumerate}
\end{Theorem}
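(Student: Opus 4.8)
Write $d=\dim M$. Since $(2)\Rightarrow(3)$ is trivial, the content lies in $(1)\Rightarrow(2)$ and $(3)\Rightarrow(1)$.

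For $(1)\Rightarrow(2)$ I would first observe that reductions localize: from $J^{n+1}M=IJ^nM$ one gets $(JR_{\fp})^{n+1}M_{\fp}=IR_{\fp}(JR_{\fp})^nM_{\fp}$ for every $\fp$. So it suffices to show that passing to a reduction leaves the $j$-multiplicity unchanged, i.e.\ $j(I,M)=j(J,M)$ whenever $I$ is a reduction of $(J,M)$; this is the case $k=0$ of \cite[Theorem 2.7]{CC}, and one can also see it directly since then $\bigoplus_n J^{n+1}M/IJ^nM$ is eventually $0$, so the Hilbert functions of $G_{\fm}(G_I(M))$ and $G_{\fm}(G_J(M))$ differ by a function of degree $<d$ and hence have the same degree-$d$ part.

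For $(3)\Rightarrow(1)$: let $\overline{(I,M)}$ denote the $M$-relative integral closure of $I$, so that $I$ is a reduction of $(J,M)$ exactly when $J\subseteq\overline{(I,M)}$; since this ideal commutes with localization, the conclusion is equivalent to $IR_{\fp}$ being a reduction of $(JR_{\fp},M_{\fp})$ for every $\fp$. Localizing and renaming, I would prove, by induction on $d$, that if $M$ is formally equidimensional, $I\subseteq J\subseteq\fm$, and $(3)$ holds at every prime, then $I$ is a reduction of $(J,M)$; the case $d=0$ is immediate, $J$ then being nilpotent on $M$. As $M$ is equidimensional, every $\fp\in\Supp M$ with $\fp\neq\fm$ has $\dim M_{\fp}<d$, and since both formal equidimensionality and condition $(3)$ localize, the inductive hypothesis gives that $IR_{\fp}$ is a reduction of $(JR_{\fp},M_{\fp})$ for all such $\fp$ (trivially so if $\fp\notin\Supp M$). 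Hence, for the finitely generated graded $R[Jt]$-module $T=\bigoplus_{n\ge0}J^{n+1}M/IJ^nM$, each localization $(J^{n+1}M/IJ^nM)_{\fp}$ vanishes for $n\gg0$ when $\fp\neq\fm$, and inspecting the associated primes of $T$ over $R[Jt]$ then shows that $J^{n+1}M/IJ^nM$ has finite length for $n\gg0$. What remains is to promote this to actual vanishing.

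This final step is where condition $(3)$ at $\fm$ is used, through a reduction to the classical theorem of Rees. Assume (harmlessly, after replacing $R$ by $R[X]_{\fm R[X]}$) that the residue field is infinite. If $\ell_M(I)<d$, the analytic-spread defect of $I$ already shows up at a prime $\fp\neq\fm$ of $\Supp(M/IM)$ and is absorbed by the induction; so assume $\ell_M(I)=d$, whence $(3)$ at $\fm$ forces $j(J,M)\geq j(I,M)>0$ and $\ell_M(J)=d$. Cutting by $d-1$ sufficiently general elements $a_1,\dots,a_{d-1}$ of $I$ and setting $N=(M/\fa M)/H^{0}_{I}(M/\fa M)$ with $\fa=(a_1,\dots,a_{d-1})$, one arranges that $\dim N=1$, that $IN$ and $JN$ are $\fm$-primary on $N$, that $N$ is again formally equidimensional, and that $j(I,M)=e(IN,N)$ and $j(J,M)=e(JN,N)$. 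Then $(3)$ at $\fm$ reads $e(IN,N)\leq e(JN,N)$, while $IN\subseteq JN$ forces $e(IN,N)\geq e(JN,N)$; so $e(IN,N)=e(JN,N)$, and the module form of Rees's theorem \cite[Theorem 11.3.1]{HS} (applicable since $N$ is formally equidimensional) makes $IN$ a reduction of $(JN,N)$, which one lifts back to $M$ along the section to finish. The main obstacle is precisely this passage: one must check that a generic $a_1,\dots,a_{d-1}\in I$ simultaneously lowers dimension and analytic spread to $1$, preserves formal equidimensionality, realizes $j(I,M)$ and $j(J,M)$ as honest Hilbert--Samuel multiplicities on the \emph{same} quotient $N$ — here using that, by the inductive step, $I$ and $J$ already agree away from $\fm$ — and permits transporting the reduction on $N$ back up to $M$ (Artin--Rees together with the genericity of the $a_i$).
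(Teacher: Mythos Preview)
The paper does not prove this statement at all: Theorem~\ref{jm} is quoted in the preliminaries section verbatim from \cite[Theorem~3.3]{FM} and used as a black box in the proof of Theorem~\ref{maincor}. There is therefore no proof in the paper to compare your proposal against.

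That said, your outline is broadly the shape of the argument in the Flenner--Manaresi paper the result is taken from: localize reductions for $(1)\Rightarrow(2)$, and for $(3)\Rightarrow(1)$ reduce by induction on $\dim M$ and a generic hyperplane section to a one-dimensional situation where the classical Rees theorem applies. The delicate points you flag---that the same generic elements must compute $j(I,M)$ and $j(J,M)$ simultaneously, that formal equidimensionality survives the cut, and that the reduction on $N$ lifts back to $M$---are exactly the technical work done in \cite{FM}; your sketch does not actually carry these out, and in particular the claim that ``if $\ell_M(I)<d$ the analytic-spread defect already shows up at a prime $\fp\neq\fm$'' is not obviously correct and would need justification. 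If you intend to supply a self-contained proof here rather than cite \cite{FM}, those steps must be filled in; otherwise the citation is what the paper does and is sufficient.
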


By \cite[Theorem 4.1]{McAdam}, we have that there are only finitely many primes with maximal analytic spread and therefore only finitely many primes where $j(I_{\fp},M_{\fp})\neq 0$.  The following result of Ciuperc\u{a} proves one direction generalization of the theorem of Rees.

\begin{Proposition} 
\cite[Proposition 2.7]{CC} Let $(R,\fm)$ be a local noetherian ring, $I\subseteq J$ be proper ideals of $R$, and $M$ a finitely generated $R$-module with $\dim M=d$.  If $I$ is a reduction of $(J,M)$, then $c_{k}(I,M)=c_{k}(J,M)$ for all $k=0,...,d$.
\end{Proposition}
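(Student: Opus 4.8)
The plan is to recognise each $c_{k}(I,M)$ as an additive, shift-invariant invariant of the extended Rees module of $I$ on $M$, and then to show that the extended Rees modules of $I$ and of $J$ differ only by a piece whose contribution to this invariant cancels against itself.

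First I would unwind the hypothesis. Since $I$ is a reduction of $(J,M)$ there is an integer $n$ with $J^{s+1}M=IJ^{s}M$ for all $s\ge n$, and a short induction gives $J^{n+m}M=I^{m}J^{n}M$ for every $m\ge 0$; in particular
\[
J^{n+m}M\ \subseteq\ I^{m}M\ \subseteq\ J^{m}M \qquad (m\ge 0).
\]
Write $\mathcal{R}'_{I}(M)=\bigoplus_{m\in\mathbb{Z}}I^{[m]}Mt^{m}$ for the extended Rees module ($I^{[m]}=I^{m}$ for $m\ge 0$, $I^{[m]}=R$ for $m<0$), and similarly $\mathcal{R}'_{I}(R)$ and $\mathcal{R}'_{J}(M)$. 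The middle containment says $\mathcal{R}'_{I}(M)$ is a graded $\mathcal{R}'_{I}(R)$-submodule of $\mathcal{R}'_{J}(M)$; the left containment, via $J^{n+m}M=I^{m}J^{n}M$, shows that $\mathcal{R}'_{J}(M)$ is a finitely generated $\mathcal{R}'_{I}(R)$-module (generated by the finitely many generators of $\bigoplus_{0\le m\le n}J^{m}M$) and that the quotient $Q:=\mathcal{R}'_{J}(M)/\mathcal{R}'_{I}(M)=\bigoplus_{m\ge 1}J^{m}M/I^{m}M$ is annihilated by $J^{n}$. Being a graded quotient of $\bigoplus_{m\ge 1}J^{m}M$ killed by $J^{n}$, $Q$ has dimension at most $\dim G_{J}(M)=d$.

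Next, recall that $t^{-1}$ is a nonzerodivisor on both $\mathcal{R}'_{I}(M)$ and $\mathcal{R}'_{J}(M)$, with $\mathcal{R}'_{I}(M)/t^{-1}\mathcal{R}'_{I}(M)=G_{I}(M)$ and $\mathcal{R}'_{J}(M)/t^{-1}\mathcal{R}'_{J}(M)=G_{J}(M)$. Applying the snake lemma to multiplication by $t^{-1}$ on $0\to\mathcal{R}'_{I}(M)\to\mathcal{R}'_{J}(M)\to Q\to 0$ yields an exact sequence of graded $\mathcal{R}'_{I}(R)$-modules
\[
0\longrightarrow K\longrightarrow G_{I}(M)\longrightarrow G_{J}(M)\longrightarrow Q/t^{-1}Q\longrightarrow 0, \qquad K=(0:_{Q}t^{-1}),
\]
while multiplication by $t^{-1}$ on $Q$ alone exhibits $K$ and $Q/t^{-1}Q$ as, up to a degree shift, the kernel and cokernel of one and the same endomorphism of $Q$. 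Now introduce the functional $\Phi$ sending a finitely generated graded $\mathcal{R}'_{I}(R)$-module $N$ to the homogeneous degree-$d$ part of the bivariate Hilbert polynomial of the bigraded module $G_{\fm}(N)=\bigoplus_{i,m}\fm^{i}N_{m}/\fm^{i+1}N_{m}$, i.e. of $(u,v)\mapsto\sum_{i\le u,\,m\le v}\lambda(\fm^{i}N_{m}/\fm^{i+1}N_{m})$. Unwinding definitions, $G_{\fm}(G_{I}(M))$ is exactly the bigraded module appearing in the Definition, so $\Phi(G_{I}(M))=\sum_{k+n=d}\tfrac{c_{k}(I,M)}{k!\,n!}u^{k}v^{n}$, and likewise for $J$. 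A graded Artin--Rees argument (applied to the submodule $\bigoplus A_{m}\subseteq\bigoplus B_{m}$ arising from a short exact sequence) shows that $\Phi$ is additive on short exact sequences, and that $\Phi(N(1))=\Phi(N)$ whenever $\dim N\le d$ (a shift in the internal grading alters the Hilbert polynomial only in degrees $<d$). Combining: additivity along the four-term sequence gives $\Phi(G_{I}(M))-\Phi(G_{J}(M))=\Phi(K)-\Phi(Q/t^{-1}Q)$; additivity and shift-invariance applied to the kernel/cokernel description of $K$ and $Q/t^{-1}Q$ give $\Phi(K)=\Phi(Q/t^{-1}Q)$ (both equal $\Phi(Q)-\Phi(Q/K)$). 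Hence $\Phi(G_{I}(M))=\Phi(G_{J}(M))$, and comparing coefficients yields $c_{k}(I,M)=c_{k}(J,M)$ for every $k$.

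The filtration algebra of the second paragraph and the snake-lemma bookkeeping are routine. I expect the main obstacle to be the final paragraph's claim that $\Phi$ is \emph{simultaneously} additive on short exact sequences and shift-invariant on modules of dimension $\le d$: this is exactly where the two associated-graded operations — the $\fm$-adic one implicit in $\Phi$, and the passage modulo $t^{-1}$ — must be reconciled, and where one must check that the error terms produced by Artin--Rees, once summed over the internal grading, remain in Hilbert-polynomial degree $<d$. The dimension bounds and leading-term identities of Proposition~\ref{AM} are the natural tool for controlling these errors. Once additivity and shift-invariance of $\Phi$ are in hand, the remainder of the argument is purely formal.
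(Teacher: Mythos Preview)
The paper does not give its own proof of this statement: it is quoted verbatim from \cite[Proposition~2.7]{CC} and left unproved, so there is nothing in the paper to compare your argument against directly.

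That said, your outline is a legitimate route to the result and is close in spirit to how such statements are handled in the literature (including in \cite{CC}): compare the filtrations $\{I^mM\}$ and $\{J^mM\}$ through their extended Rees modules, use that the reduction hypothesis makes the quotient $Q=\bigoplus_{m\ge 1}J^mM/I^mM$ finitely generated over $\mathcal{R}'_I(R)$ and killed by a power of $J$, and then read off the equality of leading Hilbert coefficients from an exact sequence linking $G_I(M)$ and $G_J(M)$. Your snake-lemma bookkeeping and the cancellation $\Phi(K)=\Phi(Q/t^{-1}Q)$ are correct once the formal properties of $\Phi$ are in place.

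Your self-assessment of the weak point is accurate. Shift-invariance of $\Phi$ is immediate once the bivariate function is known to be eventually polynomial of degree $\le d$. For additivity, the identity
\[
h_B(u,v)-h_A(u,v)-h_C(u,v)\;=\;-\sum_{m\le v}\lambda\!\bigl((A_m\cap\fm^{u+1}B_m)/\fm^{u+1}A_m\bigr)
\]
together with a \emph{single} application of Artin--Rees to the graded submodule $A\subseteq B$ over the Noetherian ring $\mathcal{R}'_I(R)$ gives a constant $c$, uniform in $m$, with $A\cap\fm^{u+1}B\subseteq\fm^{u+1-c}A$; the error is then bounded by $h_A(u,v)-h_A(u-c,v)$, which is polynomial of degree $<d$. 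The one point you pass over too quickly is why the auxiliary modules $Q,K,Q/t^{-1}Q$ have bivariate Hilbert functions that are eventually polynomial at all: since each is concentrated in nonnegative $t$-degrees and is a finitely generated graded $\mathcal{R}'_I(R)$-module annihilated by some power of $t^{-1}$ (for $K$ and $Q/t^{-1}Q$) or is an extension of such (for $Q$), one reduces to finitely generated graded $G_I(R)$-modules, where $G_\fm(-)$ lands in the standard bigraded setting of \cite{AM} and the required polynomiality and degree bound hold. With that supplied, your argument is complete.
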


\section{Superficial Elements}
In order to use induction on the dimension of the module, we require an element that preserves multiplicity and drops the dimension when we mod out by that element.

\begin{Definition}
For a given $r\in M\setminus\left\{0\right\}$, let $m$ be the largest number such that $r\in I^{m}M$.  The initial form $r^{*}$ is defined as the image of $r$ in $I^{m}M/I^{m+1}M$.  Then take the largest number $n$ where $r^{*}\in\fm^{n}(I^{m}M/I^{m+1}M)\subseteq G_{I}(M)$.  The initial form $r^{\prime}$ is defined as the image of $r^{*}$ in $(\fm^{n}I^{m}M+I^{m+1}M)/(\fm^{n+1}I^{m}M+I^{m+1}M)\subseteq G_{\fm}(G_{I}(M))$.  The initial form $I^{\prime}$ of an ideal $I$ is defined to be the ideal generated by the initial forms of all the elements in $I$.  
\end{Definition}

\begin{Definition}\label{supdef}
Let $S=G_{\fm}(G_{I}(R))$ and $N=G_{\fm}(G_{I}(M))$ and let $(0)=\bigcap_{i=0}^{t}N_{i}$ be an irredundant primary decomposition of the 0 submodule of $N$.  Denote $P_{i}=\sqrt{(N_{i}:_{S}N)}$ for $i=0,...,t$.  Assume that $I^{\prime}\subseteq P_{r+1},...,P_{t}$ and $I^{\prime}\not\subseteq P_{1},...,P_{r}$.  The element $x\in I$ is said to be a superficial element for $(I,M)$ if $x^{\prime}\notin P_{1},...,P_{r}$.
\end{Definition}

\begin{Proposition}\label{supexist}
Let $(R,\fm)$ be a local ring with infinite residue field, $M$ a finitely generated $R$-module, and $I\subset R$ an ideal.  Then there exists $x\in I\setminus\fm I$ such that $x$ is a superficial element for $(I,M)$.
\end{Proposition}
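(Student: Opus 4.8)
The plan is to reduce the statement to a prime-avoidance argument in the finite-dimensional vector space $I/\fm I$ over the infinite residue field $k=R/\fm$.

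First I would identify the initial form $x'$ for an element $x\in I\setminus\fm I$. Since $I\subseteq\fm$ we have $I^{2}\subseteq\fm I$, so $x\notin\fm I$ forces $x\notin I^{2}$; hence the largest $m$ with $x\in I^{m}$ is $m=1$ and $x^{*}$ is the class of $x$ in $I/I^{2}$. For the same reason $x^{*}\notin\fm(I/I^{2})$, so the exponent $n$ in the definition is $0$, and unwinding the bigraded module shows that $x'$ is precisely the image $\bar x$ of $x$ in
\[ [S]_{(0,1)}=\frac{I+I^{2}}{\fm I+I^{2}}=I/\fm I,\qquad S:=G_{\fm}(G_{I}(R)). \]
Thus $x\mapsto x'$, restricted to $I\setminus\fm I$, is just reduction modulo $\fm I$, and it maps $I\setminus\fm I$ onto $[S]_{(0,1)}\setminus\{0\}$. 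Consequently it suffices to produce $\bar x\in[S]_{(0,1)}$ with $\bar x\neq0$ and $\bar x\notin P_{j}$ for $j=1,\dots,r$: any lift $x\in I$ of such a $\bar x$ then lies in $I\setminus\fm I$ and is superficial for $(I,M)$.

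The core of the argument is the claim that $[S]_{(0,1)}\not\subseteq P_{j}$ for every $j=1,\dots,r$, which I would prove by contraposition: if $[S]_{(0,1)}\subseteq P_{j}$ then $I'\subseteq P_{j}$, contradicting the standing hypothesis of Definition~\ref{supdef}. The point is the bigraded structure of $S$. Writing $A_{j}=I^{j}/I^{j+1}$ one has $A_{j}=(I/I^{2})^{j}$ and $[S]_{(i,j)}=\fm^{i}A_{j}/\fm^{i+1}A_{j}$, so $[S]_{(i,j)}$ is spanned by images of products of $i$ elements of $\fm$ and $j$ elements of $I$; in particular $[S]_{(i,m)}\subseteq S\cdot[S]_{(0,1)}$ for every $m\geq1$. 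Hence $[S]_{(0,1)}\subseteq P_{j}$ implies that the ideal $\bigoplus_{m\geq1}[S]_{(\ast,m)}$ lies in $P_{j}$. Since every nonzero $x\in I$ lies in $I^{1}$, its initial form $x'$ lies in $[S]_{(\ast,m)}$ for some $m\geq1$, so $I'\subseteq\bigoplus_{m\geq1}[S]_{(\ast,m)}\subseteq P_{j}$, as claimed.

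Finally, each $P_{j}\cap[S]_{(0,1)}$ is a $k$-subspace of $[S]_{(0,1)}$, since $P_{j}$ is stable under multiplication by $[S]_{(0,0)}=k$, and it is a proper subspace by the claim above. As $k$ is infinite and $[S]_{(0,1)}=I/\fm I$ is finite-dimensional, the union $\bigcup_{j=1}^{r}\bigl(P_{j}\cap[S]_{(0,1)}\bigr)$ is a proper subset of $[S]_{(0,1)}$; taking $\bar x$ outside this union and lifting it to $I$ produces the required superficial element $x\in I\setminus\fm I$. I expect the main obstacle to be the middle paragraph: pinning down the standard bigraded structure of $G_{\fm}(G_{I}(R))$ precisely enough to pass from $I'\not\subseteq P_{j}$ to $[S]_{(0,1)}\not\subseteq P_{j}$; the outer two steps are routine prime avoidance over an infinite field together with bookkeeping about initial forms.
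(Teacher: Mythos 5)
Your proof is correct and takes essentially the same route as the paper: identify the initial form of $x\in I\setminus\fm I$ with its class in the degree-$(0,1)$ component $I/\fm I$ of $G_{\fm}(G_{I}(R))$, observe that each $P_{j}\cap(I/\fm I)$ is a proper $k$-subspace, and conclude by avoidance of finitely many proper subspaces over the infinite residue field. The only difference is that your middle paragraph carefully justifies the implication ($I'\not\subseteq P_{j}$ forces $[S]_{(0,1)}\not\subseteq P_{j}$, via the fact that the ideal generated by the degree-$(0,1)$ part contains every component of $I$-degree at least $1$ and hence contains $I'$), a step the paper's proof asserts without comment.
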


\begin{proof}
From Definition \ref{supdef}, an element $x$ is superficial if the initial form $x^{\prime}$ avoids finitely many primes $P_{i}$ (for $i=1,...,r)$ in the bigraded ring $G_{\fm}(G_{I}(R))$.  Since $I^{\prime}\not\subseteq P_{i}$, we have $Q_{i}=P_{i}\cap(I/\fm I)\subsetneq I/\fm I$; that is, each $Q_{i}$ is a proper subspace of the $(R/\fm)$ vector space $I/\fm I$.  Since $R/\fm$ is an infinite field, we have $(I/\fm I)\setminus\bigcup_{i=1}^{r}Q_{i}$ is nonempty.  Therefore, we may choose an element in $I\setminus\fm I$ whose image in $(I/\fm I)$ is not in $\bigcup_{i=1}^{r}Q_{i}$ as a superficial element.
\end{proof}

\begin{Remark}\label{suprem}
With the same assumptions as in Proposition \ref{supexist}; if $\depth_{I}(M)>0$, then there exists $x\in I\setminus\fm I$ superficial element that is a nonzero divisor on $M$.  
\end{Remark}

Since $\depth_{I}(M)>0$, we have $I\setminus\Ann(M)$ is nonempty.  Therefore, the subspace $((I\cap\Ann(M))+\fm I)/\fm I$ is a proper subspace of $I/\fm I$.  We may then set $Q_{0}=(I\cap\Ann(M))+\fm I)/\fm I$ and have $(I/\fm I)\setminus\bigcup_{i=0}^{r}Q_{i}$ is nonempty and proceed as above.

Since our goal is to use induction on the dimension of the module, we use the following proposition to ensure we can find a nonzero divisor.

\begin{Proposition}\label{nzd}
Let $(R,\fm)$ be a local ring, $M$ a finitely generated $R$-module, and $I\subseteq R$ an ideal.  If $\ell_{M}(I)>0$, then there exists $c$ such that $\depth_{I}(I^{c}M)>0$.
\end{Proposition}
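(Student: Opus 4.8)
The plan is to reduce the conclusion to a statement about associated primes and then force it using Krull's intersection theorem after localizing. For a finitely generated module $N$, prime avoidance gives the standard equivalence $\depth_I(N)>0 \iff I\not\subseteq\fp$ for every $\fp\in\Ass(N)$ (vacuously true when $N=0$), so the goal becomes: find $c$ with $I\not\subseteq\fp$ for all $\fp\in\Ass(I^cM)$. The structural input I would exploit is simply that $I^cM\subseteq M$, so $\Ass(I^cM)\subseteq\Ass(M)$ for every $c$, and $\Ass(M)$ is a fixed finite set of primes.

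I would argue by contradiction: suppose no such $c$ exists. Then for each $c$ there is $\fp_c\in\Ass(M)$ with $I\subseteq\fp_c$ and $\fp_c\in\Ass(I^cM)$ (in particular $I^cM\ne0$), and since $\Ass(M)$ is finite the pigeonhole principle produces one prime $\fp\supseteq I$ with $\fp\in\Ass(I^cM)$ for infinitely many $c$. Localizing at $\fp$ and using $(I^cM)_\fp=I_\fp^cM_\fp$ together with the behaviour of associated primes under localization, $\fp R_\fp\in\operatorname{Ass}_{R_\fp}(I_\fp^cM_\fp)$ for infinitely many $c$; so for each such $c$ there is a nonzero element of $I_\fp^cM_\fp$ annihilated by $\fp R_\fp$. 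Writing $W:=(0:_{M_\fp}\fp R_\fp)$, this says $W\cap I_\fp^cM_\fp\ne0$ for infinitely many $c$.

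To finish I would observe that $W$ is finitely generated over the field $R_\fp/\fp R_\fp$, hence of finite length over $R_\fp$, so the descending chain $\{W\cap I_\fp^cM_\fp\}_{c\ge0}$ stabilizes; its stable value lies in $\bigcap_cI_\fp^cM_\fp$, which is $0$ by Krull's intersection theorem, since $I_\fp$ sits inside the maximal ideal of the local ring $R_\fp$. Thus $W\cap I_\fp^cM_\fp=0$ for all large $c$, contradicting the previous paragraph. Hence some $c$ has $\depth_I(I^cM)>0$.

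I do not expect a genuine obstacle: the argument is short once one sees the translation to associated primes. The one point worth stating carefully in the write-up is the final localized step --- namely that ``$\fp$ is associated to $I_\fp^cM_\fp$ for infinitely many $c$'' is exactly what is incompatible with the eventual vanishing of the finite-length chain $W\cap I_\fp^cM_\fp$ (equivalently, one can phrase this localized step via Artin--Rees). The hypothesis $\ell_M(I)>0$ is not strictly needed to run the contradiction; its role is to guarantee that $I^cM\ne 0$ for every $c$, so that the conclusion concerns an honest nonzero module.
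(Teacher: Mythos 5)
Your argument is correct, and it reaches the conclusion by a genuinely different route than the paper. The paper's proof is direct and constructive: it takes an irredundant primary decomposition $0=N_{1}\cap\dots\cap N_{t}$ of the zero submodule of $M$ with radicals $\fp_{i}$, chooses $x\in I$ avoiding (by prime avoidance) the $\fp_{i}$ not containing $I$, notes that $I^{c}M\subseteq\bigcap_{I\subseteq\fp_{i}}N_{i}$ for $c\gg0$ while $(0:_{M}x)\subseteq\bigcap_{I\not\subseteq\fp_{i}}N_{i}$, and concludes $(0:_{M}x)\cap I^{c}M=0$, so the single element $x$ is a nonzerodivisor on $I^{c}M$. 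You instead prove the existential statement that $\Ass(I^{c}M)\cap V(I)=\emptyset$ for $c\gg0$ (then prime avoidance supplies a regular element): you use $\Ass(I^{c}M)\subseteq\Ass(M)$, pigeonhole on the finite set $\Ass(M)$, localize at an offending prime $\fp\supseteq I$, and kill the finite-length module $(0:_{M_{\fp}}\fp R_{\fp})\cap I_{\fp}^{c}M_{\fp}$ via Krull intersection (equivalently Artin--Rees) in $R_{\fp}$. The two mechanisms are parallel --- your Krull-intersection step at each bad prime plays the role of the paper's containment $I^{c}M\subseteq N_{i}$ for the components with $I\subseteq\fp_{i}$ --- but yours is local and by contradiction, while the paper's is global and exhibits one explicit $x$ chosen only from the data of $\Ass(M)$; the paper's version is slightly more economical for the later applications (it produces the element directly, which is what Proposition 3.5 needs), whereas yours cleanly separates the statement into ``associated primes of $I^{c}M$ eventually avoid $V(I)$'' plus prime avoidance, and could be run prime by prime without a decomposition of $0$ in $M$. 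Your closing remark that $\ell_{M}(I)>0$ is used only to guarantee $I^{c}M\neq0$ matches the paper's use of that hypothesis exactly.
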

\begin{proof}
Let $0=N_{1}\cap...\cap N_{t}$ be an irredundant primary decomposition of the $0$ submodule of $M$ and $\fp_{i}=\sqrt{(N_{i}:_{R}M)}$.  Further, assume that $I\not\subseteq\fp_{i}$ for $i=1,...,r$ and $I\subseteq\fp_{i}$ for $i=r+1,...,t$.  Let $x\in I$ be an element such that $x\notin\fp_{i}$ for $i=1,...,r$.  There exists $c$ such that $I^{c}M\subseteq\bigcap_{i=r+1}^{t}N_{i}$.  We also have $(0:_{M}x)=\bigcap_{i=1}^{t}(N_{i}:x)\subseteq\bigcap_{i=1}^{r}N_{i}$.  Therefore, $I^{c}M\cap(0:_{M}x)=0$.  Since $\ell_{M}(I)>0$, $I^{c}M\neq0$.  Therefore, $x\in I$ is a nonzero divisor on $I^{c}M$ hence $\depth_{I}(I^{c}M)>0$.
\end{proof}

\begin{Proposition}\label{superficial}
Let $(R,\fm)$ be a local ring with infinite residue field, $M$ a finitely generated module, and $I\subset R$ an ideal.  If $\ell_{M}(I)>0$ then there exists an integer $c$ and element $x\in I$ such that $x$ is a superficial element and a nonzero divisor on $I^{c}M$.
\end{Proposition}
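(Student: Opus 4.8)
The plan is to chain together the two preceding results, so that the proposition becomes essentially a repackaging. First I would invoke Proposition \ref{nzd}: since $\ell_M(I)>0$, there is an integer $c$ with $\depth_I(I^cM)>0$, and in particular $I^cM\neq 0$ (as already observed inside the proof of that proposition). The module $I^cM$ is again finitely generated, so it is a legitimate substitute for $M$ in the earlier statements.

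Next I would apply Remark \ref{suprem} with $I^cM$ in place of $M$. Its hypotheses are met: the residue field $R/\fm$ is infinite by assumption, and $\depth_I(I^cM)>0$ by the previous step. Hence there exists $x\in I\setminus\fm I$ which is a superficial element for $(I,I^cM)$ in the sense of Definition \ref{supdef} applied to the bigraded module $N=G_\fm(G_I(I^cM))$, and which is simultaneously a nonzerodivisor on $I^cM$. Taking this $x$ together with the integer $c$ produced above yields exactly the conclusion.

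The only point deserving care, and the thing I would verify explicitly, is that Definition \ref{supdef}, Proposition \ref{supexist}, and Remark \ref{suprem} all go through verbatim with $I^cM$ substituted for $M$ — that is, that ``superficial element'' in the statement is to be read relative to the very module $I^cM$ on which $x$ must also be a nonzerodivisor. Since those earlier results impose conditions only on the residue field and on $\depth_I$ of the module under consideration, this substitution is harmless and no genuinely new argument is needed; there is no real obstacle here beyond bookkeeping, the content being entirely carried by Proposition \ref{nzd} followed by Remark \ref{suprem}.
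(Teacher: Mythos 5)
Your proposal is correct and is exactly the paper's argument: the paper proves this proposition by simply citing Proposition \ref{nzd} to obtain $c$ with $\depth_I(I^cM)>0$ and then Remark \ref{suprem} applied to the module $I^cM$ to produce $x\in I\setminus\fm I$ superficial for $(I,I^cM)$ and a nonzerodivisor on $I^cM$. Your explicit check that the earlier statements apply verbatim with $I^cM$ in place of $M$ is the right (and only) point of care, and it matches the intended reading.
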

\begin{proof}
This is a consequence of Proposition \ref{nzd} and Remark \ref{suprem}.
\end{proof}

The following result proved by Ciuperc\u{a}, shows that this choice of superficial element does indeed preserve the multiplicity sequence.

\begin{Proposition}\label{invariant2}
\cite[Theorem 2.11]{CC} Let $(R,\fm)$ be a local ring, $I\subset R$ an ideal, $M$ a finitely generated module, and $x\in I\setminus\fm I$ a superficial element for $(I,M)$ and a nonzero divisor on $M$.  Then $c_{i}(I,M)=c_{i}(I,M/xM)$ for $i\leq d-2$.
\end{Proposition}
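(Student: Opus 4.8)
The plan is to compare the bigraded Hilbert polynomials of $G_{\fm}(G_{I}(M))$ and $G_{\fm}(G_{I}(M/xM))$ directly. Write $d=\dim M$; since $x\in\fm$ is a nonzerodivisor on $M$, we have $\dim(M/xM)=d-1$. Put $S=G_{\fm}(G_{I}(R))$, $N=G_{\fm}(G_{I}(M))$, $\overline N=G_{\fm}(G_{I}(M/xM))$, and for a finitely generated bigraded $S$-module $X$ let $p_{X}(u,v)$ denote the polynomial that eventually agrees with $\sum_{i\le u,\,j\le v}\lambda(X_{(i,j)})$, where $i$ is the $\fm$-degree and $j$ the $I$-degree. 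By Definition~2.3 the homogeneous degree $d$ part of $p_{N}=p_{I,M}$ is $\sum_{k+n=d}\frac{c_{k}(I,M)}{k!\,n!}u^{k}v^{n}$, and the homogeneous degree $d-1$ part of $p_{\overline N}=p_{I,M/xM}$ is $\sum_{k+n=d-1}\frac{c_{k}(I,M/xM)}{k!\,n!}u^{k}v^{n}$, so it suffices to match the coefficients of $u^{k}v^{d-1-k}$ in these two truncations for $k\le d-2$. I would also record two elementary points at the outset. Since $I\ne 0$ one has $I^{2}\subseteq\fm I$, so $x\in I\setminus\fm I$ forces $x\notin I^{2}$ and $x\notin\fm I+I^{2}$; hence the initial form $x'\in S$ has $I$-degree $1$ and $\fm$-degree $0$, and multiplication by $x'$ raises the $I$-degree by one. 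Moreover the classes $r'$ with $r\in I\setminus\fm I$ span the $I$-degree $1$, $\fm$-degree $0$ piece $S_{1}$ of the standard bigraded ring $S$, so the initial ideal $I'$ of $I$ is exactly the ``$I$-irrelevant'' ideal $S_{+}:=S\cdot S_{1}=\bigoplus_{j\ge 1}S_{(*,j)}$; since $(S_{+})^{a}=\bigoplus_{j\ge a}S_{(*,j)}$, a finitely generated $S_{+}$-torsion module is nonzero in only finitely many $I$-degrees, and so its $p_{X}$ is a polynomial in $u$ alone.

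The main tools are two exact sequences. Multiplication by $x'$ gives
\[
0\longrightarrow K\longrightarrow N(0,-1)\xrightarrow{x'}N\longrightarrow N/x'N\longrightarrow 0 ,\qquad K:=(0:_{N}x') ,
\]
where $N(0,-1)$ is the shift of $N$ by $1$ in the $I$-degree. On the other hand, the standard comparisons of associated graded objects yield a natural surjection
\[
\pi\colon\ N/x'N\ \twoheadrightarrow\ G_{\fm}\!\big(G_{I}(M)/x^{*}G_{I}(M)\big)\ \twoheadrightarrow\ G_{\fm}\big(G_{I}(M/xM)\big)=\overline N ,
\]
where $x^{*}\in G_{I}(R)_{1}$ is the leading form of $x$; set $C=\ker\pi$. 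Then $0\to N(0,-1)\xrightarrow{x'}N\xrightarrow{\pi}\overline N\to 0$ is a complex whose homology at the three spots is $K(0,-1)$, $C$, and $0$. Taking the alternating sum of Hilbert functions over this complex and summing over $i\le u$, $j\le v$ gives, for $u,v\gg 0$,
\[
p_{\overline N}(u,v)=p_{N}(u,v)-p_{N}(u,v-1)+p_{K(0,-1)}(u,v)-p_{C}(u,v) .
\]

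The heart of the matter, and the step I expect to be the main obstacle, is to show that $K$ and $C$ are $S_{+}$-torsion modules; granting this, $p_{K(0,-1)}(u,v)-p_{C}(u,v)=\varphi(u)$ is a polynomial in $u$ alone, by the remark at the end of the first paragraph. For $K$ this follows at once from Definition~3.2: writing $(0)=\bigcap_{i}N_{i}$ for the chosen irredundant primary decomposition, $P_{i}=\sqrt{(N_{i}:_{S}N)}$, and $I'=S_{+}\not\subseteq P_{1},\dots,P_{r}$, the hypothesis $x'\notin P_{1},\dots,P_{r}$ gives $(N_{i}:_{N}x')=N_{i}$ for $i\le r$, whence $K=(0:_{N}x')\subseteq\bigcap_{i\le r}N_{i}=\Gamma_{S_{+}}(N)$, the $S_{+}$-torsion submodule of $N$. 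The statement for $C$ is precisely what the bigraded notion of superficial element is designed to provide: it amounts to showing that $\pi$ is bijective in all sufficiently large $I$-degrees. For the first of the two surjections comprising $\pi$ this holds because $x$ is a superficial nonzerodivisor, so $(I^{m}M:_{M}x)=I^{m-1}M$ for $m\gg 0$, and hence $I^{m}(M/xM)/I^{m+1}(M/xM)\cong\big(G_{I}(M)/x^{*}G_{I}(M)\big)_{m}$ for $m\gg 0$; for the second surjection $G_{\fm}(G_{I}(M))/x'G_{\fm}(G_{I}(M))\to G_{\fm}\big(G_{I}(M)/x^{*}G_{I}(M)\big)$, one must use that modulo $\Gamma_{S_{+}}(N)$ the element $x'$ is a nonzerodivisor on $N$ with the correct behaviour relative to the $\fm$-adic filtration, which is exactly the content of Definition~3.2 and is the technical core of the argument carried out in \cite{CC}.

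Granting that $\varphi$ is a polynomial in $u$ alone, the conclusion follows quickly. The homogeneous degree $d-1$ part of $p_{N}(u,v)-p_{N}(u,v-1)$ comes only from the degree $d$ part of $p_{N}$: applying $v\mapsto v-1$ to $\sum_{k+n=d}\frac{c_{k}(I,M)}{k!\,n!}u^{k}v^{n}$ and using $v^{n}-(v-1)^{n}=nv^{n-1}+(\text{degree}\le n-2)$ one obtains $\sum_{k+m=d-1}\frac{c_{k}(I,M)}{k!\,m!}u^{k}v^{m}$, since the terms of $p_{N}$ of degree $\le d-1$ contribute only in degree $\le d-2$ after taking this difference. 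Hence the degree $d-1$ part of $p_{\overline N}$ equals $\sum_{k+m=d-1}\frac{c_{k}(I,M)}{k!\,m!}u^{k}v^{m}$ plus the degree $d-1$ part of $\varphi(u)$, which is a scalar multiple of $u^{d-1}$. Matching this with $\sum_{k+m=d-1}\frac{c_{k}(I,M/xM)}{k!\,m!}u^{k}v^{m}$ and reading off the coefficient of $u^{k}v^{d-1-k}$ for $k\le d-2$ — where the exponent of $v$ is positive, so the correction term $\varphi$ does not interfere — yields $c_{k}(I,M/xM)=c_{k}(I,M)$, as desired. (The coefficient of $u^{d-1}$ does pick up a contribution from $\varphi$, which is precisely why the equality may genuinely fail at $k=d-1$.)
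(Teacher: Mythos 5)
First, a point of comparison: the paper offers no proof of this proposition at all --- it is quoted directly from Ciuperc\u{a} \cite[Theorem 2.11]{CC} --- so there is no internal argument to measure yours against. Your reconstruction follows what is essentially the right architecture for such a statement (and, as far as I can tell, the one used in \cite{CC}): the two exact sequences, the identification $I'=S_{+}$ so that finitely generated $S_{+}$-torsion modules contribute a polynomial in $u$ alone, and the final coefficient bookkeeping, including the correct explanation of why the index $k=d-1$ must be excluded. The parts you carry out in full are correct: the observation that $x\notin\fm I$ forces $x'$ to sit in bidegree $(0,1)$, the containment $K=(0:_{N}x')\subseteq\bigcap_{i\le r}N_{i}=\Gamma_{S_{+}}(N)$ via Definition 3.2, and the computation that the degree-$(d-1)$ part of $p_{N}(u,v)-p_{N}(u,v-1)$ is $\sum_{k+m=d-1}\frac{c_{k}(I,M)}{k!\,m!}u^{k}v^{m}$.

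There is, however, a genuine gap exactly where you predict one: the claim that $C=\ker\pi$ is $S_{+}$-torsion is asserted rather than proved, and the one concrete ingredient you offer in its support does not follow from the paper's definitions. The identity $(I^{m}M:_{M}x)=I^{m-1}M$ for $m\gg0$ is the \emph{classical} superficiality of $x^{*}$ relative to the singly graded module $G_{I}(M)$, whereas Definition 3.2 only constrains the bigraded initial form $x'$ relative to the relevant associated primes of $G_{\fm}(G_{I}(M))$; these two notions are not formally equivalent, and no bridge between them is supplied. What actually has to be established is a bigraded Artin--Rees-type inclusion: for $j\gg0$ and all $i$,
\[
\bigl(\fm^{i}I^{j}M+I^{j+1}M\bigr)\cap\bigl(xM+\fm^{i+1}I^{j}M+I^{j+1}M\bigr)\ \subseteq\ x\fm^{i}I^{j-1}M+\fm^{i+1}I^{j}M+I^{j+1}M ,
\]
which is precisely the statement that $C_{(i,j)}=0$. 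Plain Artin--Rees only yields $xM\cap I^{j}M\subseteq xI^{j-c}M$, which loses $c$ degrees and is not enough. This is the technical core of \cite[Theorem 2.11]{CC}, and deferring it to the reference means your proposal is a correct reduction of the proposition to that core rather than a complete proof of it.
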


Since we can choose a superficial element that is a nonzero divisor on $I^{n}M$ for $n\gg0$, we now show that the multiplicity sequence does not significantly change by replacing $M$ with $I^{n}M$ for $n$ large enough.

\begin{Proposition}\label{invariant1}
Let $(R,\fm)$ be a local ring, $M$ a finitely generated module, and $I\subset R$ an ideal.  If $\dim(I^{n}M)=\dim(M)=d$ for $n\gg 0$, then $c_{i}(I,M)=c_{i}(I,I^{n}M)$ for $i\leq d-1$ and $c_{d}(I,I^{n}M)=0$ for all $n\gg0$.
\end{Proposition}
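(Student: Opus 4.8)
The plan is to compare the bigraded Hilbert functions of $M$ and of $I^{n}M$ by a direct reindexing of the $I$-adic filtration, read off the equalities for $k\le d-1$ from the leading forms of the two Hilbert polynomials, and then get the vanishing of the top multiplicity $c_{d}(I,I^{n}M)$ from the bounds in Proposition \ref{AM}.

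First I would record the reindexing identity. For every $n$ the degree $(i,j)$ component of $G_{\fm}(G_{I}(I^{n}M))$ is
\[
\frac{\fm^{i}I^{n+j}M+I^{n+j+1}M}{\fm^{i+1}I^{n+j}M+I^{n+j+1}M},
\]
which is exactly the degree $(i,n+j)$ component of $G_{\fm}(G_{I}(M))$. Summing as in the definition of $h$ and shifting the inner index gives, for all $u,v$,
\[
h_{I,I^{n}M}(u,v)=h_{I,M}(u,v+n)-h_{I,M}(u,n-1)
\]
with the convention $h_{I,M}(u,-1)=0$. For the fixed value $n$ the correction term $h_{I,M}(u,n-1)$ is a polynomial $\psi_{n}(u)$ in the single variable $u$ for $u\gg0$: each ``row'' $\bigoplus_{i}\bigl(G_{\fm}(G_{I}(M))\bigr)_{i,j}$ is the associated graded of the $\fm$-adic filtration on $I^{j}M/I^{j+1}M$, a module annihilated by $I$ and hence of dimension at most $\dim(M/IM)\le d$; so $\deg\psi_{n}\le d$.

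Next I would pass to Hilbert polynomials: for $u,v\gg0$ the identity reads $p_{I,I^{n}M}(u,v)=p_{I,M}(u,v+n)-\psi_{n}(u)$. Replacing $v$ by $v+n$ introduces only terms of total degree $<d$ (expand $(v+n)^{m}$), so the degree-$d$ homogeneous part of $p_{I,M}(u,v+n)$ is still $\sum_{k=0}^{d}\frac{c_{k}(I,M)}{k!(d-k)!}u^{k}v^{d-k}$, and subtracting the polynomial $\psi_{n}(u)$ can only change the coefficient of $u^{d}$ (and lower-total-degree terms). Since the hypothesis $\dim(I^{n}M)=d$ forces $p_{I,I^{n}M}$ to have degree exactly $d$, comparing its degree-$d$ part $\sum_{k}\frac{c_{k}(I,I^{n}M)}{k!(d-k)!}u^{k}v^{d-k}$ with the right-hand side yields $c_{k}(I,I^{n}M)=c_{k}(I,M)$ for $0\le k\le d-1$. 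For the remaining claim I would apply Proposition \ref{AM}(1) to the module $I^{n}M$: it gives $c_{i}(I,I^{n}M)=0$ whenever $\dim\bigl((I^{n}M)/I(I^{n}M)\bigr)<i\le\dim(I^{n}M)=d$, so it suffices to show $\dim(I^{n}M/I^{n+1}M)<d$ for $n\gg0$. Since $\gr_{I}(M)=\bigoplus_{m}I^{m}M/I^{m+1}M$ is finitely generated over $\gr_{I}(R)$, its support has only finitely many minimal primes; let $\fp_{1},\dots,\fp_{s}$ be those $\fp$ with $\dim R/\fp=d$. Each $\fp_{j}$ contains $I$, lies in $\Supp M$, and satisfies $\dim R/\fp_{j}=\dim M$, hence is minimal in $\Supp M$, so $M_{\fp_{j}}$ has finite length and $(I^{n}M)_{\fp_{j}}=0$ for $n\gg0$. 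Any prime $\fq\in\Supp(I^{n}M/I^{n+1}M)$ with $\dim R/\fq=d$ must be one of the $\fp_{j}$; hence for $n\gg0$ there is none, i.e.\ $\dim(I^{n}M/I^{n+1}M)<d$, and Proposition \ref{AM}(1) gives $c_{d}(I,I^{n}M)=0$.

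The point I expect to need the most care is the degree bookkeeping in the Hilbert-polynomial step: one must be certain that the correction term $h_{I,M}(u,n-1)$ involves the variable $u$ only, so that it contributes nothing to the coefficients of $u^{k}v^{d-k}$ with $k<d$. This is exactly what decouples the stability of the first $d$ multiplicities from the (generally genuine) change in the top one. The reindexing identity is purely formal, and the last step is a short localization argument at the finitely many primes $\fp\in\Supp(\gr_{I}M)$ with $\dim R/\fp=d$.
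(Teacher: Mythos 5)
Your argument is correct, and its first half is essentially the paper's proof: the same reindexing of the bigraded Hilbert function, $h_{I,I^{n}M}(u,v)=h_{I,M}(u,v+n)-h_{I,M}(u,n-1)$ (your lower limit $n-1$ is in fact the precise one; the paper writes $h_{I,M}(u,n)$, an immaterial off-by-one), followed by comparison of the degree-$d$ homogeneous parts of the two Hilbert polynomials to get $c_{k}(I,M)=c_{k}(I,I^{n}M)$ for $k\le d-1$. Where you diverge is the vanishing of the top multiplicity: the paper extracts $c_{d}(I,I^{n}M)=0$ from the very same identity by observing that the $u^{d}$-coefficient of the correction term $p_{I,M}(u,n)$ is exactly $\tfrac{c_{d}(I,M)}{d!}$ (lower-total-degree terms of $p_{I,M}$ contribute only $u$-degree $<d$ when $v=n$ is fixed), so the $u^{d}$ terms cancel; you instead leave the leading coefficient of $\psi_{n}$ unidentified and prove separately, via localization at the finitely many dimension-$d$ minimal primes of $\Supp M$ containing $I$, that $\dim(I^{n}M/I^{n+1}M)<d$ for $n\gg0$, and then quote Proposition \ref{AM}(1). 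Both routes are valid; the paper's is shorter and keeps everything inside one polynomial identity, while yours yields the extra (and independently useful) fact that $\dim(I^{n}M/I^{n+1}M)$ eventually drops below $d$ without any hypothesis on $\dim(I^{n}M)$. One small wording issue in your last step: you speak of minimal primes of the support of $\gr_{I}(M)$ as a $\gr_{I}(R)$-module but then treat them as primes of $R$; what you actually use, and what suffices, is that any $\fq\in\Supp_{R}(I^{n}M/I^{n+1}M)$ with $\dim R/\fq=d$ lies in $\Supp M\cap V(I)$ and is minimal in $\Supp M$, hence belongs to the finite set of dimension-$d$ minimal primes of $M$ containing $I$, at each of which $I^{n}M$ localizes to zero for $n\gg0$.
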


\begin{proof}
Consider the Hilbert function: 

\[h_{I,M}(u,v)=\sum_{i=0}^{u}\sum_{j=0}^{v}\lambda\Big(\frac{\fm^{i}I^{j}M+I^{j+1}M}{\fm^{i+1}I^{j}M+I^{j+1}M}\Big).\]

The function $h_{I,M}(u,v)$ is eventually polynomial, $p_{I,M}(u,v)$, for $i,j\gg 0$.  The degree $d$ part of this polynomial $p(u,v)$ is:

\[\sum_{k+l=d}\frac{c_{k}(I,M)}{k!l!}u^{k}v^{l}.\]

We can now write $h_{I,I^{n}M}(u,v)$ as

\begin{tabular}{rl}
$h_{I,I^{n}M}(u,v)$&$=\displaystyle{\sum_{i=0}^{u}\sum_{j=0}^{v}\lambda(\fm^{i}I^{j+n}M+I^{j+n+1}M/\fm^{i+1}I^{j+n}M+I^{j+n+1}M)}$\\
&$\displaystyle{=\sum_{i=0}^{u}\sum_{j=n}^{v+n}\lambda(\fm^{i}I^{j}M+I^{j+1}M/\fm^{i+1}I^{j}M+I^{j+1}M)}$\\
&$=h_{I,M}(u,v+n)-h_{I,M}(u,n)$ which implies that\\
$p_{I,I^{n}M}(u,v)$&$=p_{I,M}(u,v+n)-p_{I,M}(u,n)$ for $u,v,n\gg0$.\\
\end{tabular}

Since the polynomials $p_{I,I^{n}M}(u,v)$ and $p_{I,M}(u,v+c)-p_{I,M}(u,c)$ are equal, we have an equality on the degree $d$ parts.  For a fixed $n$ large enough, the degree $d$ part of $p_{I,M}(u,v+n)$ is the same as the degree $d$ part $p_{I,M}(u,v)$ and the degree $d$ part of $p_{I,M}(u,n)$ is $\frac{c_{d}(I,M)}{d!}u^{d}$.\\ 

Since $\dim I^{n}M=d$, the degree of $p_{I,I^{n}M}(u,v)$ must be $d$ as well.  Therefore, we can conclude that $c_{k}(I,M)=c_{k}(I,I^{n}M)$ for $k\leq d-1$ and $c_{d}(I,I^{n}M)=0$.
\end{proof}

We need to show that we can find a superficial element that is compatible with various localizations.  This will be done by providing a correspondence between the subspaces of a vector space over various residue fields of the localizations and the subspaces of a vector space over the initial residue field. 

\begin{Lemma}\label{loc1}
Let $(R,\fm)$ be a local ring with infinite residue field $k$, $\fp$ a prime ideal, $(S,\fn)$ the local domain $R/\fp$, and $V$ a subspace of $Q(S)^{n}$.  The map $\Phi$ given by $\Phi(V)=((V\cap S^{n})+\fn S^{n})/\fn S^{n}$ has the following properties:
\begin{enumerate}
\item $\Phi(V)=k^{n}$ if and only if $V=Q(S)^{n}$.
\item If $(\overline{x_{1}},...,\overline{x_{n}})\notin\Phi(V)$ then $(\overline{x_{1}},...,\overline{x_{n}})\notin V$.
\end{enumerate}
\end{Lemma}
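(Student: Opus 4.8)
The plan is to deduce both statements from Nakayama's lemma once the objects are identified correctly. First I would fix notation: $S=R/\fp$ is local with maximal ideal $\fn=\fm/\fp$ and residue field $S/\fn\cong R/\fm=k$, and since $S$ embeds in its fraction field $Q(S)$, the $Q(S)$-subspace $V\subseteq Q(S)^{n}$ is in particular an $S$-submodule of $Q(S)^{n}$, so $W:=V\cap S^{n}$ is an $S$-submodule of the free module $S^{n}$ and $\Phi(V)=(W+\fn S^{n})/\fn S^{n}$ is a $k$-subspace of $S^{n}/\fn S^{n}\cong k^{n}$. I would also record the elementary observation that $Q(S)^{n}=Q(S)\cdot S^{n}$ (clear denominators, using that $S$ is a domain), so any $Q(S)$-subspace of $Q(S)^{n}$ that contains $S^{n}$ is already all of $Q(S)^{n}$.

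For part (1), one direction is immediate: if $V=Q(S)^{n}$ then $S^{n}\subseteq V$, whence $W=S^{n}$ and $\Phi(V)=S^{n}/\fn S^{n}=k^{n}$. Conversely, suppose $\Phi(V)=k^{n}$; this says exactly that $W+\fn S^{n}=S^{n}$, i.e.\ $S^{n}/W=\fn\,(S^{n}/W)$. Since $S^{n}/W$ is a finitely generated module over the local ring $(S,\fn)$, Nakayama's lemma gives $S^{n}/W=0$, that is, $V\cap S^{n}=S^{n}$. Hence $S^{n}\subseteq V$, and by the observation above $V=Q(S)^{n}$.

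Part (2) is then a formal consequence: writing $x=(x_{1},\dots,x_{n})\in S^{n}$ for the element with those coordinates, if $x\in V$ then $x\in V\cap S^{n}=W$, so its residue $\overline{x}$ lies in $(W+\fn S^{n})/\fn S^{n}=\Phi(V)$; contrapositively, $\overline{x}\notin\Phi(V)$ forces $x\notin V$.

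I do not expect a genuine obstacle here; the only points that deserve attention are that Nakayama must be applied to the quotient $S^{n}/(V\cap S^{n})$, which is automatically finitely generated (being a quotient of $S^{n}$), rather than to $V\cap S^{n}$ itself — so no Noetherian hypothesis on $R$ is needed — and that one must keep straight the three ambient objects $Q(S)^{n}\supseteq S^{n}\twoheadrightarrow k^{n}$ in order to see at the outset that $\Phi(V)$ really is a subspace of $k^{n}$.
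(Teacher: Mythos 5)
Your proof is correct and follows essentially the same route as the paper: both directions of (1) are handled by Nakayama's lemma applied to $S^{n}/(V\cap S^{n})$ together with the observation that a $Q(S)$-subspace containing $S^{n}$ must be all of $Q(S)^{n}$, and (2) is the same trivial contrapositive. Your explicit remark that Nakayama is applied to the finitely generated quotient (so no Noetherian hypothesis is needed) is a nice clarification but not a different argument.
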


\begin{proof}
For (1), note that it is clear that if $V=Q(S)^{n}$, then $\Phi(V)=k^{n}$.  Suppose $\Phi(V)=k^{n}$.  Then $\Phi(V)=(V\cap S^{n}+\fn S^{n})/\fn S^{n}=S^{n}/\fn S^{n}$; therefore by Nakayama's Lemma, we have $(V\cap S^{n})+\fn S^{n}=S^{n}$ and $V\cap S^{n}=S^{n}$.  Thus $V\supseteq S^{n}$.  Since $V$ is a vector space that contains $S^{n}$ (in particular it contains the standard basis), $V=Q(S)^{n}$.

Part (2) is obvious, as if $(\overline{x_{1}},...,\overline{x_{n}})\in V$ then $(\overline{x_{1}},...,\overline{x_{n}})\in\Phi(V)$.
\end{proof}

\begin{Remark}\label{basis}
Let $(R,\fm)$ be a local ring with infinite residue field $k$ and $I\subset R$ an ideal, $I=(x_{1},...,x_{n})$.  Let $T:k^{n}\rightarrow I/\fm I$ defined by $T(\overline{a_{1}},...,\overline{a_{n}})=\overline{a_{1}x_{1}}+...+\overline{a_{n}x_{n}}$.  Then for finitely many proper subspaces $V_{i}$ $(i=1,...,r)$ of $I/\fm I$, there exists $(\overline{a_{1}},...,\overline{a_{n}})$ such that $T(\overline{a_{1}},...,\overline{a_{n}})\notin\bigcup_{i=1}^{r} V_{i}$.
\end{Remark}
\begin{proof}
Since $I/\fm I\cong k^{s}$ for some $s\leq n$, there exists $x\in I/\fm I$ that avoids $\bigcup_{i=1}^{r} V_{i}$.  By the surjectivity of $T$, there exists $(\overline{a_{1}},...,\overline{a_{n}})\in k^{n}$ such that $T(\overline{a_{1}},...,\overline{a_{n}})=x\notin\bigcup V_{i}$.
\end{proof}

\begin{Proposition}\label{superloc}
Let $(R,\fm)$ be a local ring with infinite residue field $k$, $I=(x_{1},...,x_{n})\subset R$ an ideal, $M$ a finitely generated module.  Then there exists $x$ such that $x\in I$ is superficial for $(I,M)$ and $\frac{x}{1}\in IR_{\fp}$ is superficial for $(IR_{\fp},M_{\fp})$ for all $\fp$ where $\ell_{M_{\fp}}(IR_{\fp})=\dim M_{\fp}$.
\end{Proposition}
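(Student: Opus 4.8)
The strategy is to translate every superficiality requirement---the global one for $(I,M)$ and the local ones for $(IR_{\fp},M_{\fp})$---into the single condition that a vector $(\bar a_1,\dots,\bar a_n)\in k^n$, with $k=R/\fm$, avoid a finite collection of proper subspaces of $k^n$. Since $k$ is infinite, no finite union of proper subspaces exhausts $k^n$, so such a vector exists, and lifting it to $R^n$ produces the element $x=a_1x_1+\dots+a_nx_n\in I$ we want.

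First, by \cite[Theorem 4.1]{McAdam}, as recalled after Theorem~\ref{jm}, the set of primes $\fp$ with $\ell_{M_{\fp}}(IR_{\fp})=\dim M_{\fp}$ is finite, say $\{\fp_1,\dots,\fp_m\}$; each such $\fp_i$ lies in $\Supp(M/IM)$ and contains $I$, so $IR_{\fp_i}$ is a proper ideal of the local ring $R_{\fp_i}$. Running the argument in the proof of Proposition~\ref{supexist} first for $(R,\fm)$ and then for each $(R_{\fp_i},\fp_iR_{\fp_i})$, being superficial is equivalent to the image of the chosen element in $I/\fm I$, respectively in $IR_{\fp_i}/\fp_i IR_{\fp_i}$, missing a finite list of proper subspaces: call them $Q_1,\dots,Q_r\subsetneq I/\fm I$ globally, and $W_{i,1},\dots,W_{i,r_i}\subsetneq IR_{\fp_i}/\fp_i IR_{\fp_i}$ for each $i$.

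Now push these conditions to $k^n$ using the generators. Let $T_0\colon k^n\twoheadrightarrow I/\fm I$ be the surjection $T$ of Remark~\ref{basis}, sending the standard basis to the classes of $x_1,\dots,x_n$; then $T_0^{-1}(Q_l)\subsetneq k^n$ for each $l$ (and $\ker T_0\subseteq T_0^{-1}(Q_l)$, so avoiding these subspaces forces $x\notin\fm I$). For each $i$, let $T_i\colon Q(R/\fp_i)^n\twoheadrightarrow IR_{\fp_i}/\fp_i IR_{\fp_i}$ be the analogous surjection and put $\widetilde W_{i,j}=T_i^{-1}(W_{i,j})\subsetneq Q(R/\fp_i)^n$. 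Applying Lemma~\ref{loc1} with $\fp=\fp_i$: part (1) shows $\Phi(\widetilde W_{i,j})$ is a proper subspace of $k^n$, and part (2) shows that if $(\bar a_1,\dots,\bar a_n)\notin\Phi(\widetilde W_{i,j})$ then its reduction to $(R/\fp_i)^n\subseteq Q(R/\fp_i)^n$ avoids $\widetilde W_{i,j}$, hence $T_i$ of that reduction---which is exactly the image of $(a_1x_1+\dots+a_nx_n)/1$ in $IR_{\fp_i}/\fp_i IR_{\fp_i}$---avoids $W_{i,j}$. Choosing $(\bar a_1,\dots,\bar a_n)\in k^n$ outside the finite union $\bigcup_l T_0^{-1}(Q_l)\cup\bigcup_{i,j}\Phi(\widetilde W_{i,j})$, lifting to $(a_1,\dots,a_n)\in R^n$, and setting $x=a_1x_1+\dots+a_nx_n\in I$ then gives an element superficial for $(I,M)$ whose image in $R_{\fp_i}$ is superficial for $(IR_{\fp_i},M_{\fp_i})$ for every $i$, which is the assertion.

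The only non-formal ingredient is McAdam's finiteness theorem; without it there would be infinitely many localizations to control, hence infinitely many subspaces to dodge in $k^n$, and the argument would collapse. Everything else is the bookkeeping of comparing vector spaces over the incompatible residue fields $Q(R/\fp_i)$, and Lemma~\ref{loc1} (the map $\Phi$) is precisely the device that replaces the impossible naive attempt to intersect such subspaces directly. I expect the step needing the most care in a full write-up to be verifying that ``superficial for $(IR_{\fp_i},M_{\fp_i})$'' really does reduce, verbatim as in Proposition~\ref{supexist}, to a subspace-avoidance condition in $IR_{\fp_i}/\fp_i IR_{\fp_i}$---that is, that the primary decomposition of $0$ in $G_{\fp_iR_{\fp_i}}(G_{IR_{\fp_i}}(R_{\fp_i}))$ governing Definition~\ref{supdef} interacts with localization as claimed.
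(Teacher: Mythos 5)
Your proposal is correct and follows essentially the same route as the paper's proof: McAdam's finiteness theorem to reduce to finitely many primes, Remark~\ref{basis} to transfer the superficiality conditions at each $\fp_i$ to proper subspaces of $Q(R/\fp_i)^n$, Lemma~\ref{loc1} (the map $\Phi$) to pull these back to proper subspaces of $k^n$, and prime avoidance over the infinite field $k$. The extra bookkeeping you spell out (the preimages $T_i^{-1}(W_{i,j})$ and the kernel of $T_0$ forcing $x\notin\fm I$) is exactly what the paper's argument implicitly does.
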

\begin{proof}
By \cite[Theorem 4.1]{McAdam} there are only finitely many primes $\fp$ with $\ell_{M_{\fp}}(IR_{\fp})=\dim M_{\fp}$.  Let $\left\{\fp_{1},...,\fp_{t}\right\}$ be the finite set of primes with this property, denote $\fp_{0}=\fm$ and let $k_{i}=Q(R/\fp_{i})$ the field of fractions of $R/\fp_{i}$.  Let $\Phi_{i}$ be the map from subspaces of $k_{i}^{n}$ defined in Lemma \ref{loc1}.  By Remark \ref{basis} we may expand subspaces of $IR_{\fp_{i}}/\fp_{i}IR_{\fp_{i}}$ to $k_{i}^{n}$.  For each $i$, let $V_{(i,j)}\subsetneq k_{i}^{n}$ be the finitely many subspaces of $k_{i}^{n}$ such that $a_{1}x_{1}+...+a_{n}x_{n}\in IR_{\fp_{i}}\setminus\fp_{i}IR_{\fp_{i}}$ is superficial whenever $(\overline{a_{1}},...,\overline{a_{n}})\in k_{i}^{n}\setminus\bigcup_{j}V_{(i,j)}$.

By Lemma \ref{loc1}, for every $i,j$ we have that $\Phi_{i}(V_{(i,j)})$ is a proper subspace of $k^{n}$ and therefore $\bigcup_{i,j}\Phi_{i}(V_{(i,j)})$ is a union of finitely many proper subspaces of $k^{n}$.  If $(\overline{a_{1}},...,\overline{a_{n}})\in k^{n}\setminus\bigcup_{i,j}\Phi_{i}(V_{(i,j)})$ then $a_{1}x_{1}+...+a_{n}x_{n}\in IR_{\fp_{i}}\setminus\fp_{i}IR_{\fp_{i}}$ is superficial for $(IR_{\fp_{i}},M_{\fp_{i}})$ for all $i=0,...,t$.
\end{proof}
\section{The Main Result}

We will now give a generalization of the formula given by Theorem \ref{AM}$(3)$.

\begin{Theorem}\label{main}
Let $(R,\fm)$ be a local ring, $I\subseteq R$ an ideal, $M$ a finitely generated $R$-module of dimension $d$ and denote
\[\Lambda_{k}=\Lambda_{k}(I,M)=\left\{\fp\mid\fp\in\Supp(M/IM), \dim(R/\fp)=k, \dim(R/\fp)+\dim M_{\fp} = \dim M\right\}.\]

Assume that the following condition is satisfied:
\[(\ast) \dim M_{\fp}=\dim(I^{n}M_{\fp})\text{ for all }n\geq0\text{ and for all }\fp\in\Supp(M/IM)\]
Then for all $k$=0,1,...,$d$ we have
\begin{equation}\label{maineq}
c_{k}(I,M)=\sum_{\fp\in\Lambda_{k}}c_{0}(IR_{\fp},M_{\fp})e(R/\fp).
\end{equation}
\end{Theorem}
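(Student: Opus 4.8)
The plan is to prove \eqref{maineq} by induction on $d=\dim M$. The case $d=0$ is immediate: there $\Lambda_{0}=\{\fm\}$ when $IM\neq M$, and $c_{0}(I,M)=\lambda(M)=c_{0}(IR_{\fm},M_{\fm})\,e(R/\fm)$. Throughout one may assume the residue field is infinite by passing to $R(t)$, a faithfully flat local extension with trivial closed fibre under which $\dim M$, each $c_{k}(I,M)$, each $c_{0}(IR_{\fp},M_{\fp})$, each $e(R/\fp)$, the sets $\Lambda_{k}$, and $(\ast)$ are all preserved. First observe that $(\ast)$ forces $\het_{M}(I)\geq 1$: a prime $\fp\in\Supp(M)\cap V(I)$ with $\dim M_{\fp}=0$ would make $M_{\fp}$ of finite length, so $IR_{\fp}\subseteq\fp R_{\fp}$ is nilpotent on it and $I^{n}M_{\fp}=0$ for $n\gg 0$, contradicting $(\ast)$ at $\fp$. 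In particular $q:=\dim M/IM\leq d-1$. Moreover $\ell_{M}(I)\geq 1$: if $\ell_{M}(I)=0$ then, since $q\leq d-1$, Proposition \ref{AM}(1) would give $c_{i}(I,M)=0$ for every $i$, contradicting that the Hilbert polynomial of $G_{\fm}(G_{I}(M))$ has degree $\dim M=d$.

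The cases $k=d$ and $k=d-1$ follow directly from Proposition \ref{AM}. Since $q\leq d-1$, Proposition \ref{AM}(1) gives $c_{d}(I,M)=0$, while $\Lambda_{d}=\emptyset$ (a prime $\fp\supseteq I$ in $\Supp M$ with $\dim R/\fp=d$ would force $\dim M/IM=d$); so $k=d$ holds. If $\het_{M}(I)\geq 2$ then $q\leq d-2$, hence $c_{d-1}(I,M)=0$ by Proposition \ref{AM}(1), and $\Lambda_{d-1}=\emptyset$ because every $\fp\supseteq I$ in $\Supp M$ has $\dim M_{\fp}\geq 2$. If $\het_{M}(I)=1$ then $q=d-1$; every $\fp\in\Lambda_{d-1}$ has $\dim R/\fp=q$, so is minimal over $\Ann(M/IM)$, hence associated to $M/IM$, and $M_{\fp}/IM_{\fp}$ has finite length, whence $c_{0}(IR_{\fp},M_{\fp})=j(IR_{\fp},M_{\fp})=e(IR_{\fp},M_{\fp})$; Proposition \ref{AM}(3), which evaluates $c_{d-1}(I,M)=c_{q}(I,M)$ as precisely this sum over the same set of primes, gives $k=d-1$.

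It remains to treat $k\leq d-2$, where we invoke the induction hypothesis. Since $\ell_{M}(I)\geq 1$, Proposition \ref{nzd} supplies $c$ with $\depth_{I}(I^{c}M)>0$; using $(\ast)$ one checks that $\dim I^{c}M=d$, that $(\ast)$ holds for $(I,I^{c}M)$, that $\Lambda_{k}(I,I^{c}M)=\Lambda_{k}(I,M)$, and, by Proposition \ref{invariant1} applied globally and at each $\fp\in\Lambda_{k}$, that neither side of \eqref{maineq} changes when $M$ is replaced by $I^{c}M$. So we may assume $\depth_{I}M>0$. By Propositions \ref{superficial} and \ref{superloc}, pick $x\in I\setminus\fm I$ superficial for $(I,M)$, a nonzerodivisor on $M$, and with $x/1$ superficial for $(IR_{\fp},M_{\fp})$ at each of the finitely many primes where $\ell_{M_{\fp}}(IR_{\fp})=\dim M_{\fp}$. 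Then $\dim M/xM=d-1$; since $x$ is a nonzerodivisor, $\dim(M/xM)_{\fp}=\dim M_{\fp}-1$ for all $\fp\in\Supp(M/IM)$, and since $xM\subseteq IM$ the fibre $(M/xM)/I(M/xM)$ coincides with $M/IM$. Hence $(\ast)$ persists for $(I,M/xM)$ and $\Lambda_{k}(I,M/xM)=\Lambda_{k}(I,M)$ for $k\leq d-2$. Now Proposition \ref{invariant2} gives $c_{k}(I,M)=c_{k}(I,M/xM)$ for $k\leq d-2$, and the induction hypothesis applied to $M/xM$ yields $c_{k}(I,M/xM)=\sum_{\fp\in\Lambda_{k}(I,M)}c_{0}(IR_{\fp},(M/xM)_{\fp})\,e(R/\fp)$. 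Thus it suffices to prove $c_{0}(IR_{\fp},(M/xM)_{\fp})=c_{0}(IR_{\fp},M_{\fp})$ for every $\fp\in\Lambda_{k}(I,M)$ with $k\leq d-2$. Only the finitely many $\fp$ at which the relevant $j$-multiplicity is nonzero contribute to either sum (the finiteness already used in Proposition \ref{superloc}); and for $\fp\in\Lambda_{k}(I,M)$ with $\ell_{M_{\fp}}(IR_{\fp})=\dim M_{\fp}$ (then $\geq 2$), $x/1$ is a superficial nonzerodivisor for $(IR_{\fp},M_{\fp})$ by construction, so the local forms of Propositions \ref{invariant2} and \ref{invariant1} give the desired equality.

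The principal difficulty is the remaining case: a prime $\fp\in\Lambda_{k}(I,M)$ with $\ell_{M_{\fp}}(IR_{\fp})<\dim M_{\fp}$, where $c_{0}(IR_{\fp},M_{\fp})=0$ and one must show $c_{0}(IR_{\fp},(M/xM)_{\fp})=0$ as well, i.e.\ that $IR_{\fp}$ still fails to have maximal analytic spread on $M_{\fp}/xM_{\fp}$. Since the analytic spread cannot increase under the graded surjection $G_{I}(M)/x^{*}G_{I}(M)\twoheadrightarrow G_{I}(M/xM)$ nor under localization, the only danger is $\ell_{M_{\fp}}(IR_{\fp})=\dim M_{\fp}-1$ with no drop on cutting by $x$, equivalently $x^{*}$ lying in a minimal prime of the fibre cone of $(IR_{\fp},M_{\fp})$ of maximal dimension $\dim M_{\fp}-1$. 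I expect this to be the heart of the argument. The plan is to show that the locus of primes at which a generic superficial element fails to drop the local analytic spread by one is again cut out on $I/\fm I$ by finitely many proper subspaces — read off from the primary decomposition of the zero submodule of $G_{\fm}(G_{I}(M))$ and its behaviour under the graded localizations of Lemma \ref{loc1} — so that a suitable strengthening of the choice in Proposition \ref{superloc} produces an $x$ avoiding all of them. Granting this, summing the matched contributions over $\fp\in\Lambda_{k}(I,M)$ completes the induction.
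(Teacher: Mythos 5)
Your outline follows the same route as the paper's proof: reduce to an infinite residue field, induct on $d$, settle $k=d$ and $k=d-1$ via Proposition \ref{AM}, pass from $M$ to $I^{c}M$ using Propositions \ref{nzd} and \ref{invariant1} to gain $\depth_{I}M>0$, cut by a superficial nonzerodivisor chosen compatibly with localizations (Propositions \ref{superficial}, \ref{superloc}), and match the two sides via Proposition \ref{invariant2} applied globally and at each $\fp\in\Lambda_{k}$. However, the proposal is not a complete proof: the step you yourself call the heart of the argument --- showing $c_{0}(IR_{\fp},(M/xM)_{\fp})=0$ for those $\fp\in\Lambda_{k}$ with $c_{0}(IR_{\fp},M_{\fp})=0$, i.e.\ ruling out the case $\ell_{M_{\fp}}(IR_{\fp})=\dim M_{\fp}-1$ with no drop in analytic spread after cutting by $x$ --- is left as a plan (``granting this''). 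Moreover the plan as stated is not obviously executable: your avoidance strategy needs the bad locus in $I/\fm I$ to be a finite union of proper subspaces, but the finiteness you can quote (\cite[Theorem 4.1]{McAdam}, used in Proposition \ref{superloc}) only covers primes with $\ell_{M_{\fp}}(IR_{\fp})=\dim M_{\fp}$; the set of primes with $\ell_{M_{\fp}}(IR_{\fp})=\dim M_{\fp}-1$ need not be finite, so before a Lemma \ref{loc1}/Remark \ref{basis} style prime-avoidance can run you would need a genuinely new finiteness or avoidance statement. For comparison, the paper introduces no extra avoidance: it chooses $x$ superficial exactly at the finitely many primes where $c_{0}(IR_{\fp},M_{\fp})\neq 0$ and then asserts, via Proposition \ref{invariant2} applied locally, that $c_{0}(IR_{\fp},(M/xM)_{\fp})=c_{0}(IR_{\fp},M_{\fp})$ for all $\fp\in\Lambda_{k}$ with $\dim M_{\fp}\geq 2$, giving no separate argument at the primes of vanishing local multiplicity. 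So you have put your finger on a point the paper dispatches in one line, but your write-up leaves precisely that content unsupplied, and this is a genuine gap.

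Two smaller points. The implication ``$\het_{M}(I)=1\Rightarrow\dim(M/IM)=d-1$'' in your $k=d-1$ case needs justification: without equidimensionality one can have $\dim(R/\fp)+\dim M_{\fp}<d$, so $\het_{M}(I)=1$ is compatible with $\dim(M/IM)<d-1$. The paper instead splits on $\dim(M/IM)$, and your missing subcase is handled exactly like your $\het_{M}(I)\geq 2$ case, so this is easily repaired. Also, your opening claim that $(\ast)$ forces $\het_{M}(I)\geq 1$ (hence $\Lambda_{d}=\emptyset$ and $c_{d}(I,M)=0$) depends on the convention for the dimension of the zero module and sits awkwardly with your own $d=0$ base case; the paper keeps the possibility $\dim(M/IM)=d$ alive and settles $k=d$ and $k=d-1$ directly from Proposition \ref{AM}(3), which is the safer treatment.
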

\begin{proof}
We may replace $R$ with the faithfully flat extension $R[x]_{\fm[x]}$ and $M$ with $M\otimes R[x]_{\fm[x]}$ and in doing so assume $R$ has an infinite residue field.  First note the sums in \ref{maineq} are finite.  Indeed, by Proposition \ref{AM} $c_{0}(IR_{\fp},M_{\fp})=0$ whenever $\dim M_{\fp}\neq\ell_{M_{\fp}}(IR_{\fp})$ and the set of primes $\fp$ that satisfy $\dim M_{\fp}=\ell_{M_{\fp}}(IR_{\fp})$ is finite \cite[4.1]{McAdam}.  

We will proceed by induction on $d=\dim M$.  For $d=0$, (\ref{maineq}) becomes $c_{0}(I,M)=c_{0}(I,M)e(R/\fm)$, which is trivial. 

Now consider the case $\dim M=1$.  It is clear that (\ref{maineq}) holds for $k=0$.  If $\dim(M/IM)=1$, then by Proposition \ref{AM}(3) we have \[c_{1}(I,M)=\sum_{\fp\in\Lambda_{1}}c_{0}(IR_{\fp},M_{\fp})e(R/\fp).\]If $\dim(M/IM)=0$, by Proposition \ref{AM}(1) we have $c_{1}(I,M)=0$.  It suffices to show that $\Lambda_{1}$ is empty.  Indeed, if $\fp\in\Supp(M/IM)$, then $\dim(R/\fp)\leq\dim(M/IM)=0$.

Now suppose that (\ref{maineq}) holds for all finitely generated $R$-modules of dimension less than $d$ that satisify $(\ast)$.  Let $M$ be a finitely generated $R$-module of dimension $d\geq 2$ that satisfies $(\ast)$.  

First we prove (\ref{maineq}) for $k=d$.  If $\dim(M/IM)<d$, then $\Lambda_{d}$ is empty.  (If $\fp\in\Supp(M/IM)$, then $\dim(R/\fp)<d$.)  Also, $c_{d}(I,M)=0$ by Proposition \ref{AM}(1).  If $\dim(M/IM)=d$, then by Proposition \ref{AM}(3) we get \[c_{d}(I,M)=\sum_{\fp\in\Lambda_{d}}c_{0}(IR_{\fp},M_{\fp})e(R/\fp).\]  From now on we assume $k\leq d-1$.

If $\ell_{M}(I)=0$, then $I^{n}M=0$ for $n\gg0$ and since $M$ satisfies $(\ast)$, we must have $\dim M=0$. Therefore $\ell_{M}(I)>0$.  By Proposition \ref{superficial} there exists a positive integer $c$ such that $\depth_{I}(I^{c}M)>0$.  We claim that if the theorem holds for the module $I^{c}M$ and $k\leq d-1$, then the theorem holds for the $R$-module $M$ and all $k\leq d-1$.

Indeed, if $M$ satisfies $(\ast)$, then $I^{c}M$ is also $d$-dimensional and satisfies $(\ast)$, so assume \[c_{k}(I,I^{c}M)=\sum_{\fp\in\Lambda_{k}}c_{0}(IR_{\fp},I^{c}M_{\fp})e(R/\fp)\] for $k=0,...,d-1$.  However, by Proposition \ref{invariant1} we have $c_{k}(I,I^{c}M)=c_{k}(I,M)$ for $k=0,...,d-1$ and $c_{0}(IR_{\fp},I^{c}M_{\fp})=c_{0}(IR_{\fp},M_{\fp})$ for $\fp\in\Lambda_{k}(I^{c}M)$ $(\dim M_{\fp}\geq 1$ because $d-1\geq k=\dim(R/\fp)=d-\dim M_{\fp})$.  Also note that $\Lambda_{k}(I,M)\supseteq\Lambda_{k}(I^{c}M)$ for $k\leq d-1$.  We only need to show that $\Lambda_{k}(I,M)\subseteq\Lambda_{k}(I,I^{c}M)$.

By contradiction, assume that $\fp\in\Lambda_{k}(I,M)\setminus\Lambda_{k}(I,I^{c}M)$ with $k\leq d-1$.  Then $I^{c}M_{\fp}=I^{c}M_{\fp}$ and by Nakayama's Lemma we get $I^{c}M_{\fp}=0.$  Then $\dim M_{\fp}=\dim I^{c}M_{\fp}=0$, and since $\fp\in\Lambda_{k}(I,M)$ we obtain $k=\dim R/\fp = \dim M$, which is a contradiction.  This concludes the proof of the claim.  So by replacing $M$ with $I^{c}M$ we may assume that $\depth_{I}(M)>0$.

Next we consider the case when $k=d-1$.  If $\dim(M/IM)=d-1$, then by Proposition \ref{AM}(3) it follows that (\ref{maineq}) holds for $k=d-1$.  If $\dim(M/IM)<d-1$, then by Proposition \ref{AM}(1) we have $c_{d-1}(I,M)=0$.  We also note that in this case $\Lambda_{d-1}(M)=\emptyset$.  If $\fp\in\Supp(M/IM)$, then $\dim(R/\fp)\leq\dim(M/IM)<d-1$.  So we may assume that $k\leq d-2$.

Let $x\in I$ be a nonzero divisor on $M$ that is superficial on $M$ and $M_{\fp}$ for all the finitely many prime ideals $\fp$ such that $c_{0}(IR_{\fp},M_{\fp})\neq 0$. Such an element exists by \ref{superloc}.  Note that $M/xM$ is a $d-1$ dimensional $R$-module that also satisfies $(\ast)$.  By the induction hypothesis \[c_{k}(I,M/xM)=\sum_{\fp\in\Lambda_{k}(M/xM)}c_{0}(IR_{\fp},(M/xM)_{\fp})e(R/\fp).\]Note that $\Lambda_{k}(I,M)=\Lambda_{k}(I,M/xM)$.  

By Proposition \ref{invariant2} $c_{k}(I,M/xM)=c_{k}(I,M)$ and $c_{0}(IR_{\fp},(M/xM)_{\fp})=c_{0}(IR_{\fp},M_{\fp})$ for $\dim M_{\fp}\geq 2$ which is true for 
$\fp\in\Lambda_{k}(I,M)$ and $k\leq d-2$.  Therefore \[c_{k}(I,M)=\sum_{\fp\in\Lambda_{k}(I,M)}c_{0}(IR_{\fp},M_{\fp})e(R/\fp)\] for all $k\leq d-2$.  This finishes our inductive argument.
\end{proof}

The condition $(\ast)$ was needed to be able to replace $M$ with $I^{n}M$ without changing the multiplicity sequence.  However, we note that $(\ast)$ is satisfied whenever $\het_{M}(I)>0$.

\begin{Corollary}\label{cor}
Let $(R,\fm)$ be a local ring, $I\subset R$ an ideal, and $M$ a finitely generated $R-$module.  If $\het_{M}(I)>0$, then for $0\leq k\leq d=\dim(M)$ we have
\[c_{k}(I,M)=\sum_{\fp\in\Lambda_{k}}c_{0}(IR_{\fp},M_{\fp})e(R/\fp).\]
\end{Corollary}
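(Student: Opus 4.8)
The plan is to deduce the corollary from Theorem~\ref{main} by verifying that the hypothesis $\het_{M}(I)>0$ forces condition~$(\ast)$; granting this, the displayed identity is literally the conclusion of Theorem~\ref{main} for each $k=0,\dots,d$, and nothing further is needed. The first observation is that $\het_{M}(I)>0$ is equivalent to the statement that $I$ is contained in no minimal prime of $\Supp M$. Indeed, if $\fq$ is minimal in $\Supp M$, then $\Supp_{R_{\fq}}(M_{\fq})=\{\fq R_{\fq}\}$, so $R_{\fq}/\Ann_{R_{\fq}}(M_{\fq})$ is artinian and $\dim M_{\fq}=0$; hence $I\subseteq\fq$ would place $\fq$ in $\Supp M\cap V(I)$ with $\dim M_{\fq}=0$, contradicting $\het_{M}(I)=\min\{\dim M_{\fp}\mid\fp\in\Supp M\cap V(I)\}>0$.

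The heart of the argument is then to show $\Supp(I^{n}M)=\Supp M$ for every $n\ge 0$. The inclusion $\subseteq$ is automatic. For the reverse, fix $\fp\in\Supp M$ and choose a minimal prime $\fq$ of $\Supp M$ with $\fq\subseteq\fp$; then $\fq R_{\fp}$ is a minimal prime of $\Supp_{R_{\fp}}(M_{\fp})$, so $\sqrt{\Ann_{R_{\fp}}(M_{\fp})}\subseteq\fq R_{\fp}$. If $(I^{n}M)_{\fp}=(IR_{\fp})^{n}M_{\fp}$ were $0$, then $IR_{\fp}\subseteq\sqrt{\Ann_{R_{\fp}}(M_{\fp})}\subseteq\fq R_{\fp}$, forcing $I\subseteq\fq$ and contradicting the first step; hence $\fp\in\Supp(I^{n}M)$.

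To finish, localize the equality $\Supp(I^{n}M)=\Supp M$ at an arbitrary prime $\fp$ to obtain $\Supp_{R_{\fp}}\big((I^{n}M)_{\fp}\big)=\Supp_{R_{\fp}}(M_{\fp})$, hence $\dim(I^{n}M_{\fp})=\dim M_{\fp}$. Applying this for every $\fp\in\Supp(M/IM)$ (in fact for every $\fp\in\Supp M$) shows that $M$ satisfies $(\ast)$, and Theorem~\ref{main} then gives the asserted formula. I expect the only point requiring care to be the support/radical bookkeeping in the second paragraph --- deriving $I\subseteq\fq$ from $(IR_{\fp})^{n}M_{\fp}=0$ via the containment of $\sqrt{\Ann_{R_{\fp}}(M_{\fp})}$ in the minimal prime $\fq R_{\fp}$ --- but this is routine once one works over the local ring $R_{\fp}$; all the real content is already in Theorem~\ref{main}.
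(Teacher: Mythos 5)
Your proposal is correct and follows essentially the same route as the paper: both reduce to checking condition $(\ast)$ by observing that $\het_{M}(I)>0$ keeps $I$ out of every minimal prime of $\Supp M$, and then showing that multiplying by $I^{n}$ does not change the support (equivalently, the radical of the annihilator), hence not the dimension, after localizing. The only cosmetic difference is that you prove $\Supp(I^{n}M)=\Supp M$ globally and then localize, while the paper first localizes and compares $\sqrt{\Ann(M)}$ with $\sqrt{\Ann(I^{n}M)}$; these are the same argument.
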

\begin{proof}
If suffices to show that if $\het_{M}(I)>0$, then $\dim M_{\fp}=\dim I^{c}M_{\fp}$ for all positive integers $c$ and all primes $\fp\in\Supp(M/IM)$.

If $\het_{M}(I)>0$, then $I\not\subseteq\fq$ for all $\fq\in\Min(M)$.  Since $\het_{M}(I)>0$, $I$ is not in any minimal prime of $M$ and so $IR_{\fp}$ is not in any minimal prime of $M_{\fp}$.  Therefore, $\het_{M_{\fp}}(IR_{\fp})>0$ and it suffices to show that $\dim(M_{\fp})=\dim(I^{n}M_{\fp})$ only for $\fp=\fm$.

Since $I^{n}M\subseteq M$, we have $\sqrt{\Ann(M)}\subseteq\sqrt{\Ann(I^{n}M)}$.  Let $y\in\Ann(I^{n}M)$.  We have $yI^{n}M=0$ and so $yI^{n}\in\Ann(M)$.  So $yI^{n}\in\fq$ for every $\fq\in\Min(M)$.  Since $I\not\subseteq\fq$, we obtain $y\in\fq$ and so $\sqrt{\Ann(M)}=\sqrt{\Ann(I^{n}M)}$.  Therefore $\dim(M)=\dim(I^{n}M)$.   
\end{proof}

\section{Multiplicity and Reduction}

The primary reason for attaining the formula in Theorem \ref{main} is to prove the following converse of \cite[Theorem 2.7]{CC}.

\begin{Theorem}\label{maincor} 
Let $(R,\fm)$ be a local ring, $M$ a finitely generated formally equidimensional module, and $I\subseteq J$ ideals with $\het_{M}(I)>0$.  If $c_{k}(I,M)=c_{k}(J,M)$ for all $k\leq d=\dim(M)$, then $I$ is a reduction of $(J,M)$.
\end{Theorem}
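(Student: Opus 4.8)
The plan is to reduce the statement to the local $j$-multiplicity criterion of Flenner and Manaresi (Theorem \ref{jm}) by showing that the hypothesis $c_k(I,M)=c_k(J,M)$ for all $k$ forces $j(IR_\fp,M_\fp)=j(JR_\fp,M_\fp)$ at every prime $\fp\in\Spec(R)$. The bridge between these two pieces of data is precisely the linear formula of Corollary \ref{cor}: since $\het_M(I)>0$ (and hence $\het_M(J)>0$, as $I\subseteq J$), both $I$ and $J$ satisfy condition $(\ast)$, so for every $k$ we have
\[
c_k(I,M)=\sum_{\fp\in\Lambda_k(I,M)}c_0(IR_\fp,M_\fp)e(R/\fp),\qquad
c_k(J,M)=\sum_{\fp\in\Lambda_k(J,M)}c_0(JR_\fp,M_\fp)e(R/\fp).
\]
Recalling that $c_0(IR_\fp,M_\fp)=j(IR_\fp,M_\fp)$, the right-hand sums are over the (finitely many) primes where the respective local $j$-multiplicities are nonzero and of the appropriate colength.

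First I would pin down the relationship between $\Lambda_k(I,M)$ and $\Lambda_k(J,M)$ and between the local $j$-multiplicities. By Ciuperc\u{a}'s Proposition \cite[Proposition 2.7]{CC}, or more elementarily by the fact that $I\subseteq J$ forces $j(IR_\fp,M_\fp)\le j(JR_\fp,M_\fp)$ for all $\fp$ (this is the inequality direction built into Theorem \ref{jm}), we already know $c_k(I,M)\le c_k(J,M)$ termwise would follow if the index sets matched; the subtlety is that passing from $I$ to $J$ can only shrink the support $\Supp(M/JM)\subseteq\Supp(M/IM)$, hence $\Lambda_k(J,M)\subseteq\Lambda_k(I,M)$ is plausible but needs the dimension condition checked. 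The key monotonicity input is: for each prime $\fp$, $j(IR_\fp,M_\fp)\le j(JR_\fp,M_\fp)$, with both sides nonzero only when $\ell_{M_\fp}(IR_\fp)=\dim M_\fp$ (resp.\ for $J$), and $e(R/\fp)>0$ always. So I would argue that $c_k(I,M)=c_k(J,M)$ for every $k$, combined with termwise domination $c_0(IR_\fp,M_\fp)e(R/\fp)\le c_0(JR_\fp,M_\fp)e(R/\fp)$ over a common (or comparable) index set, forces equality $j(IR_\fp,M_\fp)=j(JR_\fp,M_\fp)$ at each individual $\fp$.

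The cleanest way to organize this is induction on $k$ running downward (or a direct comparison stratified by $\dim R/\fp$): fix $\fp_0$ with $\dim R/\fp_0=k$ and $\dim R/\fp_0+\dim M_{\fp_0}=d$; localizing at $\fp_0$, the formula applied to the local ring $R_{\fp_0}$ with module $M_{\fp_0}$ expresses $c_{k'}(IR_{\fp_0},M_{\fp_0})$ in terms of local $j$-multiplicities at primes contained in $\fp_0$, and the hypothesis localizes because $c_k(I,M)$ for the top piece $k=d$ is itself $\sum j(IR_\fp,M_\fp)e(R/\fp)$ over maximal-spread primes. I would set this up so that at each stage the only new unknown is $j(IR_{\fp_0},M_{\fp_0})$ versus $j(JR_{\fp_0},M_{\fp_0})$, all deeper primes having been handled inductively, and the sign conditions ($e(R/\fp)>0$, $j(I_\fp)\le j(J_\fp)$) pin it down. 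Once $j(IR_\fp,M_\fp)=j(JR_\fp,M_\fp)$ is established for all $\fp\in\Spec(R)$, Theorem \ref{jm} (equivalence of (1) and (2)) immediately yields that $I$ is a reduction of $(J,M)$.

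\textbf{Main obstacle.} The delicate point is the bookkeeping on the index sets $\Lambda_k(I,M)$ versus $\Lambda_k(J,M)$ together with the localization step: a priori a prime $\fp$ could contribute to the $I$-formula but have $c_0(JR_\fp,M_\fp)=0$ (if $\ell_{M_\fp}(JR_\fp)<\dim M_\fp$ even though $\ell_{M_\fp}(IR_\fp)=\dim M_\fp$), which would seem to break termwise comparison; but the inequality $j(I_\fp)\le j(J_\fp)$ forces $j(I_\fp)=0$ in that case, so such primes drop out of both sums harmlessly. Making this airtight — i.e.\ verifying that after removing the "dead" primes from both sides one really does get a term-by-term comparison over a common finite index set, and that the localized formula is available at each $\fp_0$ because $(\ast)$ is inherited under localization (which Corollary \ref{cor}'s proof already shows, since $\het_{M_\fp}(IR_\fp)>0$) — is where the care is needed. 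The rest is formal.
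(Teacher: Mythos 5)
Your overall strategy (use the linear formula to turn the equalities $c_k(I,M)=c_k(J,M)$ into termwise equalities of local $j$-multiplicities, then invoke Theorem \ref{jm}) is the same as the paper's, but the key inequality you rely on is backwards, and this is a genuine gap. You claim that $I\subseteq J$ automatically forces $j(IR_\fp,M_\fp)\le j(JR_\fp,M_\fp)$ for all $\fp$ and call this ``the inequality direction built into Theorem \ref{jm}.'' It is not: condition (3) of Theorem \ref{jm} is an \emph{equivalent characterization} of $I$ being a reduction of $(J,M)$, not a consequence of $I\subseteq J$. If that inequality held automatically, then (3)$\Rightarrow$(1) of Theorem \ref{jm} would make every inclusion of ideals a reduction, which is absurd; indeed, for multiplicities the natural monotonicity under inclusion goes the other way (smaller ideal, larger multiplicity). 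Because of this, your termwise-comparison argument, and also your ``dead primes drop out harmlessly'' remark in the obstacle paragraph, have no valid basis: equality of the two sums does not by itself force equality of individual terms without a one-sided bound in the correct direction.

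The correct ingredient, which the paper uses, is the Flenner--Manaresi result \cite[Theorem 3.2]{FM}: if $IR_\fq$ is a reduction of $(JR_\fq,M_\fq)$ for all $\fq\subsetneq\fp$, then $c_{0}(IR_\fp,M_\fp)\ge c_{0}(JR_\fp,M_\fp)$. This inequality is \emph{conditional}, which is why the paper runs an induction on $\het_M\fp$ for $\fp\in\Supp(M/IM)$: at the minimal height (primes in $\Lambda_q(I,M)$, $q=\dim M/IM$) the hypothesis of \cite[Theorem 3.2]{FM} is trivially satisfied because all strictly smaller primes lie outside $\Supp(M/IM)$; there one also uses that $c_0(IR_\fp,M_\fp)=e(IR_\fp,M_\fp)>0$ to force $\Lambda_q(I,M)=\Lambda_q(J,M)$ and termwise equality, hence local reductions by Theorem \ref{jm}; the inductive step then feeds these local reductions back into \cite[Theorem 3.2]{FM} at the next height and compares the two sums from Theorem \ref{main} at the corresponding index $k=d-n$ (together with an argument identifying $\Lambda_{d-n}(I,M)$ with $\Lambda_{d-n}(J,M)$). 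Your alternative suggestion of localizing the formula at a fixed $\fp_0$ also does not work as stated, since the hypothesis $c_k(I,M)=c_k(J,M)$ is a global statement and does not directly yield $c_{k'}(IR_{\fp_0},M_{\fp_0})=c_{k'}(JR_{\fp_0},M_{\fp_0})$. To repair your proof you would need to replace the false monotonicity claim by the conditional inequality of \cite[Theorem 3.2]{FM} and organize the argument as an induction on $\het_M\fp$ exactly so that its hypothesis is available at each stage.
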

\begin{proof}
Using the definition of $\Lambda_{k}(I,M)$ given in Theorem \ref{main}, since $M$ is assumed to be formally equidimensional, we have \[\Lambda_{k}(I,M)=\left\{\fp\mid\fp\in\Supp(M/IM), \het_{M} \fp=d-k\right\}.\]  
Further, note that $\Lambda_{k}(J,M)\subseteq\Lambda_{k}(I,M)$.  By Proposition \ref{jm} we need to show $c_{0}(IR_{\fp},M_{\fp})=c_{0}(JR_{\fp},M_{\fp})$ for all $\fp\in\Spec(R)$.  Note that for a prime ideal $\fp\not\in\Supp(M/IM)$, we have $IR_{\fp}=JR_{\fp}=R_{\fp}$ or $M_{\fp}=0$, and so $IR_{\fp}$ is a reduction of $(JR_{\fp},M_{\fp})$.  Therefore, we only need to show $c_{0}(IR_{\fp},M_{\fp})=c_{0}(JR_{\fp},M_{\fp})$ for all $\fp\in\Supp(M/IM).$  Further, note that $\Lambda_{k}(I,M)=\emptyset$ for $k>q=\dim(M/IM)$ and $\Supp(M/IM)=\bigcup_{i=0}^{q}\Lambda_{i}(I,M)$.  We will proceed by induction on $\het_{M}(\fp)$ for $\fp\in\Supp(M/IM)$.  

First, we show equality for all the prime ideals with $\het_{M} \fp=d-q$, equivalently $\fp\in\Lambda_{q}(I,M)$.

Since $c_{q}(I,M)=c_{q}(J,M)$, by Theorem \ref{main} we have \[\sum_{\fp\in\Lambda_{q}(I,M)}c_{0}(IR_{\fp},M_{\fp})e(R/\fp)=\sum_{\fp\in\Lambda_{q}(J,M)}c_{0}(JR_{\fp},M_{\fp})e(R/\fp).\]

Note that $\Lambda_{q}(I,M)\supseteq\Lambda_{q}(J,M)$.  For $\fq\subsetneq\fp\in\Lambda_{q}(I,M)$, clearly $IR_{\fq}$ is a reduction of $(JR_{\fq},M_{\fq})$ as $\fq\not\in\Supp(M/IM)$.  Therefore, by \cite[Theorem 3.2]{FM} we have $c_{0}(IR_{\fp},M_{\fp})\geq c_{0}(JR_{\fp},M_{\fp})$ for all $\fp\in\Lambda_{q}(I,M)$.

Since $c_{0}(IR_{\fp},M_{\fp})>0$ for $\fp\in\Lambda_{q}(I,M)$ (it coincides with the classical Samuel multiplicity), we have $\Lambda_{q}(I,M)=\Lambda_{q}(J,M)$ and $c_{0}(IR_{\fp},M_{\fp})=c_{0}(JR_{\fp},M_{\fp})$ for all $\fp\in\Lambda_{q}(I,M)$.  By Theorem \ref{jm} we have $IR_{\fp}$ is a reduction of $(JR_{\fp},M_{\fp})$. 

Suppose $c_{0}(IR_{\fq},M_{\fq})=c_{0}(JR_{\fq}M_{\fq}$ for all $\fq$ with $\het_{M}\fq<n\leq d$, equivalently by Theorem \ref{jm} $IR_{\fq}$ is a reduction of $(JR_{\fq},M_{\fq})$ for all $\fq$ with $\het_{M}\fq<n\leq d$.  Since $c_{d-n}(I,M)=c_{d-n}(J,M)$, by Theorem \ref{main} we have 
\begin{equation}\label{coreq}
\sum_{\fp\in\Lambda_{d-n}(I,M)}c_{0}(IR_{\fp},M_{\fp})e(R/\fp)=\sum_{\fp\in\Lambda_{d-n}(J,M)}c_{0}(JR_{\fp},M_{\fp}).
\end{equation}
For every $\fq\subsetneq\fp$, we have $IR_{\fq}$ is a reduction of $(JR_{\fq},M_{\fq})$ since $\het_{M}\fq<\het_{M}\fp$. By \cite[Theorem 3.2]{FM} $c_{0}(IR_{\fp},M_{\fp})\geq c_{0}(JR_{\fp},M_{\fp})$ for $\fp\in\Lambda_{d-n}(J,M)$.

If $\fp\in\Lambda_{d-n}(J,M)$ and $c_{0}(JR_{\fp},M_{\fp})\neq0$ then $c_{0}(IR_{\fp},M_{\fp})\neq0$ and so $\fp\in\Lambda_{d-n}(I,M)$ by the previous inequality.  If $\fp\in\Lambda_{d-n}(J,M)$ and $c_{0}(JR_{\fp},M_{\fp})=0$, then there exists some $\fq\subsetneq\fp$ such that $c_{0}(JR_{\fq},M_{\fq})\neq0$ and thus $c_{0}(IR_{\fq},M_{\fq})\neq0$, since $IR_{\fq}$ is a reduction of $(JR_{\fq}, M_{\fq})$, and $\fq\in\Supp(M/IM)$.  Since $\fq\subsetneq\fp$, we have  $\fp\in\Lambda_{d-n}(I,M)$.  Therefore we have $\Lambda_{d-n}(I,M)=\Lambda_{d-n}(J,M)$.

So we have from (\ref{coreq})
\[\sum_{\fp\in\Lambda_{q}(J,M)}c_{0}(IR_{\fp},M_{\fp})e(R/\fp)=\sum_{\fp\in\Lambda_{q}(J,M)}c_{0}(IR_{\fp},M_{\fp})e(R/\fp).\]

Therefore, $c_{0}(IR_{\fp},M_{\fp})=c_{0}(JR_{\fp},M_{\fp})$ for $\fp\in\Lambda_{d-n}(I,M)$.  By Theorem \ref{jm} we have $IR_{\fp}$ is a reduction of $(JR_{\fp},M_{\fp})$ for $\fp$ with $\het_{M} \fp=n$ which concludes the proof.
\end{proof}

\end{document}